\documentclass[12pt,a4paper]{amsart}
\usepackage{amsmath}
\usepackage{amsthm}
\usepackage{amsfonts}
\usepackage{amssymb}
\usepackage{dsfont}
\usepackage{xr}
\usepackage{hyperref}

\pagestyle{headings}
\setlength{\topmargin}{-15mm}
\setlength{\headheight}{1cm}
\setlength{\textheight}{25cm}
\setlength{\topskip}{-2cm}
\setlength{\oddsidemargin}{0cm}
\setlength{\evensidemargin}{0cm}
\setlength{\textwidth}{16cm}
\newlength{\abstand}
\setlength{\abstand}{5 mm}

\DeclareFontFamily{OT1}{pzc}{}
\DeclareFontShape{OT1}{pzc}{m}{it}{<-> s * [1.150] pzcmi7t}{}
\DeclareMathAlphabet{\mathcal}{OT1}{pzc}{m}{it}

\def\F{{\mathds F}}
\def\Fpb{{\overline{{\mathds F}_p}}}

\def\C{{\mathds C}}
\def\N{{\mathds N}}

\def\Q{{\mathds Q}}
\def\Z{{\mathds Z}}
\def\P{{\mathds P}}

\def\Zs{\str{\Z}}

\def\Qb{\overline{\Q}}

\def\fX{{\mathfrak X}}
\def\fB{{\mathfrak B}}
\def\fU{{\mathfrak U}}

\def\cL{\mathcal L}

\def\cO{\mathcal O}

\def\et{{\mathrm{\acute{e}t}}}

\newcommand\spec[1]{{\mathrm{Spec}\,(#1)}}

\newcommand\str[1]{{\mbox{}^*#1}}

\swapnumbers
\theoremstyle{definition}
\newtheorem{defi}{Definition}[section]

\newtheorem{lemma}[defi]{Lemma}

\newtheorem{rem}[defi]{Remark}

\newtheorem{thm}[defi]{Theorem}
\newtheorem{cor}[defi]{Corollary}
\newtheorem{conj}[defi]{Conjecture}

\hbadness=10000

\input xy
\xyoption{all}

\parindent0mm 

\parskip3mm   


\title{Uniform bounds and ultraproducts of cycles} 
\author{Lars Br\"unjes, Christian Serp\'e}
\date{\today}
\address{Lars Br\"unjes}
\email{lbrunjes@gmx.de}
\address{
  Christian Serp\'e \\
  Westf\"alische Wilhelms-Universit\"at M\"unster \\
  Mathematisches Institut \\
  Einsteinstrasse 62
  D-48149 M\"unster \\
  Germany}
\email{serpe@uni-muenster.de}

\subjclass[2000]{14G13,14C25,14F20}


\date{\today}

\begin{document}

\begin{abstract}
This paper is about the question whether a cycle in the l-adic
cohomology of a smooth projective variety over $\Q$,
which is algebraic over almost all finite fields $\F_p$, is also algebraic over
$\Q$. We use ultraproducts respectively nonstandard techniques in the sense of A.
Robinson, which the authors applied systematically to algebraic geometry in \cite{enlsch}
and \cite{EoSII}. We give a reformulation of the question in form of
uniform bounds for the complexity of algebraic cycles over finite
fields.

\end{abstract}

\maketitle

\section{Introduction}

Let $X$ be a smooth and projective variety over $\Q$, and let $l$ be
a prime. Let $\eta\in H^{2i}(X_{\Qb},\Q_l(i))$ be a class in the
geometric l-adic cohomology. This paper concerns the question whether $\eta$
is \emph{algebraic over $\Q$}, i.e. whether there is an
$\alpha \in CH^i(X)\otimes_{\Z}\Q_l$ such that
$cl_{\Q}(\alpha)=\eta$, where
$$cl_{\Q}: CH^i(X)\otimes \Q_l\rightarrow H^{2i}(X_{\Qb},\Q_l(i))$$
is the cycle class map into l-adic cohomology.

\vspace{\abstand}

Over an
open dense part $\fB$  of $\spec \Z$ the variety $X$ has a smooth and
projective model $\fX$. Therefore by smooth and proper base change in \'{e}tale cohomology, for almost all primes $p$ we can identify
$$H^{2i}_\et(X_{\Qb},\Q_l(i))\cong H^{2i}_\et(\fX_{\Fpb}, \Q_l(i)).$$

Further there is a specialization map for Chow groups
$$CH^i(\fX_{\Q})\rightarrow CH^i(\fX_{\F_p})$$
such that the diagram
\begin{displaymath}\label{comdiafund}
\xymatrix{
CH^i(\fX_{\Q})\otimes_{\Z}\Q_l \ar[r] \ar[d]_{cl_{\Q}} & CH^i(\fX_{\F_p})\otimes_{\Z}\Q_l \ar[d]^{cl_{\F_p}} \\
H^{2i}_\et(\fX_{\Qb},\Q_l(i))\ar[r]^{\sim} & H^{2i}_\et(\fX_{\Fpb},
\Q_l(i))
}
\end{displaymath}

commutes. Therefore we know that if $\eta$ is algebraic over $\Q$,
then for almost all primes $p$ there is an
$\alpha_p\in CH^i(\fX_{\F_p})\otimes \Q_l$ with
$cl_{\F_p}(\alpha_p)=\eta$, i.e. $\eta\in
H^{2i}_\et(\fX_{\overline{\F_p}}, \Q_l(i))$
is algebraic over $\F_p$. It is a conjecture
that the converse is also true:

\vspace{\abstand}

\begin{conj}\label{dreamtheorem}
In the above situation $\eta\in H^{2i}_\et(X_{\Qb},\Q_l(i))$ is
algebraic over $\Q$ if and only if it is algebraic over $\F_p$ for almost all primes $p$.
\end{conj}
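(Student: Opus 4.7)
My plan splits into an easy direction and the genuinely hard one. The ``only if'' implication is already implicit in the discussion preceding the statement: a rational representative $\alpha$ of $\eta$ specialises to representatives $\alpha_p \in CH^i(\fX_{\F_p})\otimes \Q_l$ whose cycle classes coincide with $\eta$ under the proper and smooth base change isomorphism, so $\eta$ is algebraic over $\F_p$ for every $p$ in the good reduction locus $\fB$.

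For the converse I would apply the nonstandard framework of \cite{enlsch} and \cite{EoSII}. Fix a nonprincipal ultrafilter $\cU$ on the set of primes $p \in \fB$ for which a representative $\alpha_p$ of $\eta$ exists, and form the ultraproduct field $\cK := \prod_\cU \F_p$; by \L{}o\'s's theorem $\cK$ is algebraically closed of characteristic zero and of cardinality continuum. Any embedding $\Qb \hookrightarrow \cK$ identifies $\fX_\cK$ with the base change of $X_{\Qb}$, and the family $(\alpha_p)$ ultraproduces to an internal class $\alpha_\infty \in \str{CH^i}(\fX_\cK) \otimes \str\Q_l$. Applying \L{}o\'s to the cycle class diagrams from the introduction gives $cl_\cK(\alpha_\infty) = \eta_\cK$, so after base change from $\Qb$ to $\cK$ the class $\eta$ is already realised by an algebraic cycle.

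The decisive remaining step is to descend $\alpha_\infty$ from $\cK$ back to $\Qb$. My approach would be to fit the $\alpha_p$ into a relative Chow variety: for each effective multidegree $d$, an appropriate component of the Hilbert or Chow scheme of $\fX_\fB$ over $\fB$ is of finite type over $\fB$, its $\F_p$-points are cut out by equations of bounded complexity, their ultraproduct is an honest $\cK$-point of the same scheme, and a place of $\cK$ over $\Qb$ specialises this to a $\Qb$-point representing a cycle on $X_{\Qb}$. A trace argument then places this cycle over a number field, and comparison of rational and $l$-adic cycle class maps forces it to represent $\eta$.

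The main obstacle is that the descent tacitly requires the $\alpha_p$ to be representable by cycles whose multidegree is bounded uniformly in $p$; without such a bound $\alpha_\infty$ lies in no standard Chow variety, and the ultraproduct yields no cycle over $\Qb$. To close this gap I would try to combine a limit argument exploiting that $\eta$ is a fixed standard object in an $l$-adic cohomology group of finite rank with an internal counting argument on the stratification of the Hilbert scheme by multidegree, transferring nonstandard bounds back to uniform bounds over the $\F_p$ via \L{}o\'s. Establishing this uniform boundedness is, in my view, the single hard step of the proof, and is precisely where any honest attempt at the conjecture via ultraproducts must concentrate its effort.
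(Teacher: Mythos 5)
The statement you are trying to prove is stated in the paper as a \emph{conjecture}, not a theorem: the authors do not prove it, they only remark that it would follow from the Tate conjecture, and the entire point of the paper is to prove the weaker, conditional statement that you relegate to a side remark. Your proposal correctly identifies the ``only if'' direction as the easy specialization argument, and your ultraproduct setup for the converse is essentially the paper's method for its main theorem (Corollary \ref{cor:main}): transfer produces an internal cycle $\alpha_\infty$ over a characteristic-zero internal field with $\str{cl}(\alpha_\infty)=\str{\eta}$, and descent from that field back to $\Q$ goes through rigidity of Chow groups with finite coefficients, Galois descent and Chebotarev. (Minor slip: $\prod_\cU \F_p$ is of characteristic zero but not algebraically closed; you need $\prod_\cU \overline{\F_p}$ for that, and the identification with a base change of $X_{\Qb}$ requires choosing an embedding, which is fine.)

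The genuine gap is exactly the step you flag at the end and then wave at. An internal cycle obtained by transfer lies in the image of $N:CH^i(\fX_K)\rightarrow \str{CH}^i(\str{\fX}_K)$ \emph{only if} its $*$complexity is bounded by a standard natural number (Lemma \ref{lem:imageN}); without a uniform bound on the complexity of the $\alpha_p$, the internal class $\alpha_\infty$ has $*$finite but unbounded degree and simply is not the image of any honest cycle over the ultraproduct field, so no Hilbert-scheme or specialization argument can produce a cycle over $\Qb$ from it. Your proposed fix --- ``a limit argument exploiting that $\eta$ is a fixed standard object'' combined with ``an internal counting argument on the stratification of the Hilbert scheme'' --- does not work: the finiteness of the rank of the cohomology group gives no control over the degrees of cycles representing a fixed class (the paper's own remark citing Mumford's theorem on $0$-cycles of surfaces with $p_g\neq 0$ shows that complexity cannot be bounded in terms of cohomological data alone), and transfer only moves a \emph{standard} uniform bound into the nonstandard world, never manufactures one. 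Establishing the uniform bound is precisely the open content of the conjecture; what you have actually written out is a proof of the paper's main theorem, which \emph{assumes} that bound as a hypothesis.
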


This conjecture would for example follow from the Tate conjecture for $X$.

\vspace{\abstand}

If $\eta$ is indeed algebraic over $\Q$, we know that all the $\alpha_p\in
CH^i(\fX_{\F_p})\otimes \Q_l$ are specializations of one single $\alpha\in CH^i(X)\otimes \Q_l$.
And this tells us something about the complexity of the $\alpha_p$. To make
this more precise, we define:
\begin{defi}\label{firstcyclescomplexity}
Let $X$ be a projective scheme over a field $k$  with a fixed
projective embedding $X\hookrightarrow \P^n_k$, and let $\alpha\in CH^i(X)\otimes\Q_l$.
We say that \emph{the complexity of $\alpha$ is bounded by $c\in\N$}
if we can write $\alpha$ as $\alpha=\sum_{i=1}^{r} a_i [Z_i]$
with $a_i\in\Q_l$ and $Z_j\hookrightarrow X$ such that the following holds:
$$ r<c\text{, and for all } i=1,\dots,r \text{ we have }
|a_i|_l<c \text{ and } \deg(Z_i)<c.$$
\end{defi}

\vspace{\abstand}

Now the complexity of every $\alpha\in CH^i(X)\otimes \Q_l$ is obviously bounded by
\emph{some} $c_0\in \N$. It can be seen easily that the complexity of the
resulting family of specializations ${\alpha_p}$ is then uniformly
bounded by $c_0$ as well. The main result of this paper is the converse of this
observation:

\vspace{\abstand}

\begin{thm}[cf. corollary \ref{cor:main}]
In the above situation let $\eta\in H_\et^{2i}(X_{\Qb},\Q_l(i))$. We
assume that there is a $c_0\in\N$ such that for almost all primes $p$ there is
an $\alpha_p\in CH^i(\fX_{\F_p})\otimes\Q_l$ with complexity bounded by
$c_0$ and with $cl_{\F_p}(\alpha_p)=\eta$. Then there is an $\alpha\in
CH^i(X)\otimes \Q_l$ with $cl_{\Q}(\alpha)=\eta$, i.e. $\eta$ is
algebraic over $\Q$
\end{thm}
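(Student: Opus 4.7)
The plan is to exploit the ultraproduct/nonstandard machinery that the authors have developed in \cite{enlsch,EoSII}, in the spirit of a Lefschetz principle for algebraic cycles. Fix a nonprincipal ultrafilter $\cU$ on the set of primes $p\in\fB$ for which the comparison isomorphism and the hypothesis on $\alpha_p$ both hold, and set $K:=\prod_{p}\Fpb/\cU$. By \L{}o\'s's theorem $K$ is algebraically closed of characteristic $0$, and the family $(\fX_{\Fpb})_p$ ultraproducts to a smooth projective variety $\str{\fX}_K$ over $K$. Using the nonstandard $l$-adic formalism of \cite{enlsch,EoSII}, the identifications $H^{2i}_\et(X_{\Qb},\Q_l(i))\cong H^{2i}_\et(\fX_{\Fpb},\Q_l(i))$ assemble into a comparison between $H^{2i}_\et(X_{\Qb},\Q_l(i))$ and (a suitable standard part of) the $l$-adic cohomology of $\str{\fX}_K$, under which $\eta$ corresponds to the ultralimit of its images on the right.

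The decisive step is to use the uniform complexity bound $c_0$ to ensure that the cycles $\alpha_p$ ultraproduct to an \emph{actual, finite} cycle on $\str{\fX}_K$ rather than a merely internal/nonstandard one. After shrinking $\cU$ we may assume that the number of summands $r_p=r$ is constant with $r<c_0$ and that the Hilbert polynomials of the subvarieties $Z_{p,j}\hookrightarrow\fX_{\F_p}$ are independent of $p$. Since $\deg Z_{p,j}<c_0$, the $Z_{p,j}$ correspond to $\F_p$-points of a fixed Hilbert scheme of finite type over $\Z$, and their ultraproduct defines honest closed subvarieties $Z_j\hookrightarrow\str{\fX}_K$. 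Simultaneously, $|a_{p,j}|_l<c_0$ places the coefficients in a compact subset of $\Q_l$, so their ultralimit admits a standard part $a_j\in\Q_l$. This produces an honest cycle
$$\alpha_K \;:=\; \sum_{j=1}^{r} a_j [Z_j] \;\in\; CH^i(\str{\fX}_K)\otimes\Q_l,$$
and compatibility of the cycle class map with the ultraproduct construction (the main technical content being imported from \cite{enlsch,EoSII}) gives $cl_K(\alpha_K)=\eta$ under the comparison established above.

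What remains is to descend $\alpha_K$ from $K$ down to $\Q$. Since $\str{\fX}_K$ is the base change of $X$ from $\Q$ to $K$, and since $\alpha_K$ and the $Z_j$ involve only finitely many elements of $K$, the cycle $\alpha_K$ is defined over a finitely generated $\Q$-subalgebra $A\subseteq K$. Choosing any $\Qb$-valued point of $\spec A$ and specializing yields $\alpha_{\Qb}\in CH^i(X_{\Qb})\otimes\Q_l$ with $cl_{\Qb}(\alpha_{\Qb})=\eta$. Since $\alpha_{\Qb}$ lives over a finite Galois extension of $\Q$, Galois averaging -- which is permissible because $\Q_l$ has characteristic $0$ and $\eta$ is $\mathrm{Gal}(\Qb/\Q)$-invariant (as follows via Chebotarev from the fact that, for almost all $p$, $\eta$ is Frobenius-invariant at $p$) -- descends it to the desired $\alpha\in CH^i(X)\otimes\Q_l$.

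The main obstacle I anticipate is the compatibility statement used at the end of the second paragraph: one must show that the nonstandard cycle class map on $\str{\fX}_K$, evaluated on the \emph{standard} cycle $\alpha_K$, matches up under the ultraproduct comparison with the classes $\eta$ coming from the individual $\fX_{\F_p}$. This requires genuine nonstandard versions of smooth/proper base change, continuity of $l$-adic cohomology under ultraproducts, and the interaction of the cycle class map with the standard-part operation -- precisely the kind of structural input that the authors' framework in \cite{enlsch,EoSII} is set up to provide.
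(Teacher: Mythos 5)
Your plan follows the paper's framework closely in its first two stages: passing to the ultraproduct of the $\Fpb$ is exactly the transfer argument (the paper fixes an infinite prime $b\in\str{\fB}$ and works with the internal cycle $x_b\in\str{CH}^i(\str{\fX}_{\kappa(b)})\otimes\str{\Q_l}$ of bounded $\str{compl}$); your Hilbert-scheme argument for why bounded complexity forces the $Z_j$ to be honest standard subvarieties is the content of lemma \ref{lem:imageN} (citing lemma 5.2 of \cite{EoSII}); and taking standard parts of the $l$-adic coefficients via compactness of $\Z_l$ is precisely what the paper does in case (iii) and then reuses in case (iv), theorem \ref{satz:einhauptsatz2} --- though you should also observe, as the paper does, that the number of summands $n$ must be externally finite (else $c/n$ is infinitesimal and the summands would have to vanish). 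The genuine divergence is the final descent from the characteristic-zero field $K$ to $\Q$. You spread $\alpha_K$ out over a finitely generated $\Q$-subalgebra $A\subseteq K$, specialize at a $\Qb$-point of $\spec A$, and Galois-average, with Chebotarev supplying the Galois invariance of $\eta$; the paper instead invokes rigidity of Chow groups with finite coefficients, namely that $CH^i(X_{\Qb})/l^n\to CH^i(X_{\overline K})/l^n$ is an isomorphism, to collapse $K$ to $\Qb$ in one step, and then applies Galois descent. Both routes are viable. Rigidity gives a one-line reduction at the cost of citing a deep theorem; your specialization route is more elementary in spirit but obliges you to shrink $\spec A$ to secure flatness and a good local ring at the chosen point, and then to verify that specialization of Chow groups is compatible with the $l$-adic cycle class map --- a true compatibility, but one you have left implicit and which carries real technical weight, whereas in the paper's approach these issues are absorbed into the single citation of the rigidity theorem.
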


\vspace{\abstand}

In particular this means that in order to decide the question whether a cohomology class is
algebraic over $\Q$, it is enough to understand the case of all finite fields uniformly well enough.

\vspace{\abstand}

In the integral case, where we consider the image of the cycle
class map
$$CH^i(X)\rightarrow H^{2i}_\et(X_{\Qb},\Z_l),$$
we prove a stronger version for $i=1$, which does not need the notion of complexity
(cf. theorem \ref{thm:withoutcompl}):

\begin{thm}
In the above situation let $\eta\in H_\et^{2}(X_{\Qb},\Z_l(1))$.
We assume that for almost all primes $p$ there is a
$z_p\in CH^1(\fX\otimes_{\Z}\F_p)$ such that
$$cl_{\Fpb}(z_p)=\eta.$$
Then there is a field extension $K/\Q$ and an element $z\in
CH^1(\fX_K)$ with
$$cl_{K}(z)=\eta \text{ in } H^2_\et(\fX_{\overline{K}},\Z_l(1)).$$
\end{thm}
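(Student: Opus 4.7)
The strategy is to use the ultraproduct/nonstandard framework from \cite{enlsch} and \cite{EoSII}. Fix a nonprincipal ultrafilter $\U$ on the cofinite set of primes $p$ for which $z_p$ exists, and let $K := \prod_\U \F_p$; by {\L}o\'s's theorem $K$ is a field of characteristic zero, so $\Q \subset K$. The candidate will be $z := \prod_\U z_p$, which I plan to realise as an element of $CH^1(\fX_K) = \Pic{\fX_K}$ and then show that its cycle class equals $\eta$.

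For realising $z$: since $\fX/\fB$ is smooth and projective, $\mathrm{Pic}_{\fX/\fB}$ is a group scheme locally of finite type over $\fB$. By Hirzebruch--Riemann--Roch applied to the fixed projective embedding, the equality $cl_{\Fpb}(z_p) = \eta$ forces a single Hilbert polynomial $P \in \Q[t]$ independent of $p$, so every $z_p$ lies in the finite-type subscheme $\mathrm{Pic}^P_{\fX_{\F_p}/\F_p} \subset \mathrm{Pic}_{\fX_{\F_p}/\F_p}$. Applying the enlargement machinery of \cite{enlsch} to this finite-type scheme, a standard {\L}o\'s argument produces
$$
\prod_{\U} \mathrm{Pic}^P(\fX_{\F_p}) \;=\; \mathrm{Pic}^P_{\fX_K/K}(K) \;\subset\; \Pic{\fX_K},
$$
so $z$ is well-defined as an element of $CH^1(\fX_K)$.

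For verifying $cl_K(z) = \eta$ in $H^2_\et(\fX_{\overline K},\Z_l(1))$: by smooth and proper base change, for each $n \geq 1$ and almost all $p$ the finite group $H^2_\et(\fX_{\Fpb},\mu_{l^n})$ is canonically identified with $H^2_\et(\fX_\Qb,\mu_{l^n})$, which in turn coincides with $H^2_\et(\fX_{\overline K},\mu_{l^n})$. In this common finite group the hypothesis reads $cl_{\Fpb}(z_p) \equiv \eta \pmod{l^n}$, and the cycle class map is functorial in the scheme and hence commutes with ultraproducts. Since the ultraproduct of a constant finite group is the group itself, {\L}o\'s yields $cl_K(z) \equiv \eta \pmod{l^n}$ in $H^2_\et(\fX_{\overline K},\mu_{l^n})$; passing to the inverse limit over $n$ gives the desired equality in $H^2_\et(\fX_{\overline K},\Z_l(1))$.

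The chief obstacle is rigorously justifying the identification of the nonstandard Picard group with the Picard group of the ultraproduct scheme; this rests on the representability of $\mathrm{Pic}^P$ by a finite-type scheme together with the enlargement framework of \cite{enlsch}. The restriction to $i=1$ is exactly what allows one to do without a complexity hypothesis: the Picard functor is representable by a scheme locally of finite type, and fixing the Hilbert polynomial already cuts out a finite-type locus in which the ultraproduct of $(z_p)$ automatically lives --- an option unavailable for higher Chow groups, which is why the complexity bound is needed in the general theorem.
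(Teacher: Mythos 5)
Your proof captures exactly the key insight that drives the paper's argument for $i=1$: the Hilbert polynomial of $z_p$ (with respect to the fixed polarization) is determined by the cohomology class $\eta$ via Riemann--Roch, hence is a single standard polynomial $P$ independent of $p$, and this replaces the explicit complexity hypothesis needed in the other theorems. The paper phrases this in the enlargement language --- transfer gives a *cycle $z_s \in \str{CH}^1(\str{\fX}_{\kappa(s)})$ at an infinite prime, the *Hilbert polynomial of $z_s$ is then observed to be standard (``finite'') because it is computed from $\str{\eta}$, and the descent to an actual divisor is delegated wholesale to Corollary 5.4 of \cite{EoSII} --- whereas you make the descent step explicit by invoking representability of the Picard functor and the finite type of $\mathrm{Pic}^P_{\fX/\fB}$, then applying {\L}o\'s to the $\F_p$-points of this quasi-projective scheme. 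These are the same argument at the level of ideas (the cited Corollary 5.4 is presumably a cycle-theoretic packaging of the same bounded-Hilbert-scheme reasoning), but your version is more self-contained and makes the role of $i=1$ transparent. Two small points worth flagging in your write-up: (a) identifying $\mathrm{Pic}(\fX_{\F_p})$ and $\mathrm{Pic}(\fX_K)$ with the $\F_p$-, resp.\ $K$-points of the relative Picard scheme uses that the relevant Brauer obstruction vanishes --- this is clear over $\F_p$ and over the pseudo-finite field $K$ it follows from Ax's theorem, but it should be said; (b) the commutation of $K$-points of a finite-type scheme with ultraproducts requires quasi-compactness and separatedness, which $\mathrm{Pic}^P$ has by quasi-projectivity --- again true, but worth stating since the full Picard scheme is only locally of finite type. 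Neither is a gap, just an omitted justification.
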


\vspace{\abstand}

The basic idea for the proofs is the following: By our assumption there is a cofinite
set $S\subset\P$ of the set of primes $\P$ such that for all $p\in S$ there are cycles
$\alpha_p\in CH^i(\fX_{\F_p})\otimes\Q_l$ with $cl_{\F_p}(\alpha_p)=\eta$.
Now we choose an ultrafilter $\fU$ on $S$ and consider the ultraproduct of these cycles
$$(\alpha_p)_{p\in S,\fU}\in\prod_{p\in S, \fU} (CH^i(\fX_{\F_p})\otimes \Q_l).$$
In our papers  \cite{EoSII} we studied how Chow groups and \'{e}tale cohomology behave under
such ultraproducts. In particular  in loc. cit. we constructed morphisms
\begin{equation}\label{firstN}
CH^i\left(\fX\otimes_{\Z}\prod_{p\in S,\fU} \F_p\right) \rightarrow
   \prod_{S,\fU}\left(CH^i(\fX_{\F_p})\right)
\end{equation}
and
\begin{equation}\label{secondN}
H_\et^{2i}\left(\fX\otimes_{\Z}\prod_{p\in S,\fU} \overline{\F_p},\Z/l^n\Z(i)\right)
\rightarrow \prod_{S,\fU} H_\et^{2i}\left(\fX_{\overline{\F_p}},\Z/l^n\Z(i)\right)
\end{equation}
and described the image of these morphisms: The image
of (\ref{firstN}) can be described by the complexity
of the occurring cycles, whereas morphism (\ref{secondN})
is an isomorphism in our situation.
Furthermore, for all $n\in\N$ we have a commutative diagram
$$
\xymatrix@C=50mm{
\prod_{S,\fU}\left(CH^i(\fX_{\F_p})\right) \ar[r]^{\prod cl} & \prod_{S,\fU} H_\et^{2i}\left(\fX_{\overline{\F_p}},\Z/l^n\Z(i)\right) \\
CH^i\left(\fX\otimes_{\Z}\prod_{p\in S,\fU} \F_p\right) \ar[r]^{cl} \ar[u] &
   H_\et^{2i}\left(\fX\otimes_{\Z}\prod_{p\in S,\fU} \overline{\F_p},\Z/l^n\Z(i)\right). \ar[u]
}
$$

Now $\prod_{p\in S,\fU}\F_p$ is a field of characteristic zero,
so an extension of $\Q$, and the assumptions imply that our cycle
$\eta\in H^{2i}_{et}(X_{\Qb},\Z_l(i))$ lies in the image of
$$cl:CH^i\left(\fX\otimes_{\Z}\prod_{p\in S,\fU} \F_p\right)\otimes_{\Z}\Q_l \rightarrow
   H_\et^{2i}\left(\fX\otimes_{\Z}\prod_{p\in S,\fU} \overline{\F_p},\Z_l(i)\right).$$

\vspace{\abstand}

Instead of using ultraproducts directly, we use the more general method of enlargements in the sense of A.~Robinson,
which the authors applied to modern algebraic geometry in \cite{enlsch} and \cite{EoSII}.

\vspace{\abstand}

All results of this paper remain true (with minor modifications) if we
start with a smooth projective variety over an arbitrary number field.
The proofs need only little modifications in this case.
Yet we decided to stick to the case of a smooth projective variety over $\Q$ to make notations easier and the paper more readable.

\vspace{\abstand}

In section 2 we give the basic definitions we need and introduce an appropriate notion of complexity of cycles.

In section 3 we give different versions of our main result and discuss the integral case for divisors without using the notion of complexity.

\section{Basic notations}

First we fix some notations. For a smooth, proper and connected scheme
$X$ over a field $k$, we denote by $Z_i(X)$ the free abelian group generated by those integral
subschemes $Y\hookrightarrow X$ of $X$ with $\dim(Y)=i$ . By $Z_i^{eff}(X)$ we
denote the submonoid of $Z_i(X)$ of \emph{effective cycles}, i.e. elements of the form $\sum_j \alpha_j [Z_j]$
with $\alpha_i\in \N$. By $CH_i(X)$ we denote the quotient of
$Z_i(X)$ by rational equivalence, and by $CH^{eff}_i(X)$ we denote the
submonoid of $CH_i(X)$ generated by elements of $Z^{eff}_i(X)$.
We also want to use the grading by codimension, and so we denote
$$ Z^i(X):=Z_{\dim(X)-i}(X),$$
$$ Z_{eff}^i(X):=Z^{eff}_{\dim(X)-i}(X),$$
$$ CH^i(X):=CH_{\dim(X)-i}(X),$$
$$ CH^i_{eff}(X):=CH^{eff}_{\dim(X)-i}(X).$$

\vspace{\abstand}

For 0-cycles we denote by
$$\deg:CH_0(X)=CH^{\dim(X)}(X) \rightarrow \Z$$
the \emph{degree} map which is defined by
$\sum_j \alpha_j[Z_j]\mapsto \sum_j \alpha_j [\kappa(Z_j):k].$
For higher dimensional cycles we first fix an ample line bundle $\cL$
on $X$
and denote by $c_1(\cL)$ the first Chern class of $\cL$. With that we
define the degree in general as
$$
\begin{array}{cccc}
\deg_{\cL}: & CH_i(X)=CH^{\dim(X)-i}(X) & \rightarrow & \Z \\
           &  x                & \mapsto     & \deg(c_1(\cL)^i\cap x).
\end{array}
$$

\vspace{\abstand}

\begin{rem}
If $X\hookrightarrow \P^n$ is a closed embedding and $\cL:=\cO_{\P^n}(1)_{\vert X}$,
then the above notion gives the ordinary degree of integral subschemes as subschemes
of $\P^n$. For details we refer to \cite{fulton}[section 2.5].
\end{rem}

\vspace{\abstand}

Next consider the cycle class map
$$cl_{k}: CH^i(X)\rightarrow H^{2i}_\et(X_{\overline{k}},\Z_l(i)),$$
where the right hand side is the l-adic cohomology
$$H^{2i}_\et(X_{\overline{k}},\Z_l(i)):=
\varprojlim_{n} H^{2i}_\et(X_{\overline{k}},\mu_{l^n}^{\otimes i}).$$

\vspace{\abstand}

\begin{lemma}
There is a degree map
$$\deg_{\cL}^{\et}:H_\et^{2i}(X,\Z_l)\rightarrow\Z_l$$
such that we have $\deg_{\cL}^{\et}\circ cl=\deg_{\cL}$.
\end{lemma}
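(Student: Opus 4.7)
The plan is to imitate the cycle-level definition of $\deg_{\cL}$ cohomologically: the cap with $c_1(\cL)^{d-i}$ corresponds to the cup with the étale first Chern class $c_1^{\et}(\cL)^{d-i}$, and the degree of the resulting $0$-cycle corresponds to the étale trace map. Write $d:=\dim X$.

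First I would introduce the étale first Chern class $c_1^{\et}(\cL)\in H_\et^2(X_{\overline{k}},\Z_l(1))$, obtained as the inverse limit over $n$ of the images of $[\cL]\in \Pic{X}$ under the boundary maps $H_\et^1(X_{\overline{k}},\mathbf{G}_m)\rightarrow H_\et^2(X_{\overline{k}},\mu_{l^n})$ arising from the Kummer sequences $1\rightarrow\mu_{l^n}\rightarrow\mathbf{G}_m\rightarrow\mathbf{G}_m\rightarrow 1$. Next I would invoke the trace map $\mathrm{tr}_X:H_\et^{2d}(X_{\overline{k}},\Z_l(d))\rightarrow\Z_l$ for the smooth, proper, geometrically connected $d$-dimensional variety $X$ (SGA~4, Exp.~XVIII), which is an isomorphism. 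With these in hand I define
$$ \deg_{\cL}^{\et}(\xi) := \mathrm{tr}_X\bigl(c_1^{\et}(\cL)^{d-i}\cup\xi\bigr). $$

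To verify $\deg_{\cL}^{\et}\circ cl=\deg_{\cL}$ on $CH^i(X)$, I would invoke two standard compatibilities of the cycle class map:
\begin{enumerate}
\item (Multiplicativity) $cl$ is compatible with intersection and cup product, so iteratively $cl(c_1(\cL)^{d-i}\cap x)=c_1^{\et}(\cL)^{d-i}\cup cl(x)$ for every $x\in CH^i(X)$.
\item (Trace versus degree on $0$-cycles) For every $0$-cycle $z$ one has $\mathrm{tr}_X(cl(z))=\deg(z)$.
\end{enumerate}
Chaining these yields $\deg_{\cL}^{\et}(cl(x))=\mathrm{tr}_X(cl(c_1(\cL)^{d-i}\cap x))=\deg(c_1(\cL)^{d-i}\cap x)=\deg_{\cL}(x)$, as required.

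Both compatibilities are classical (cf.\ SGA~4, Exp.~XVIII, and standard references on étale cohomology). The only genuine work is bookkeeping the Tate twists and collecting the correct references; no substantive new arguments are required, which is consistent with this being a preparatory lemma. The one pitfall to flag is that the right-hand target in the statement is written as $\Z_l$ with no twist — which I read as an implicit identification $\Z_l(d)\cong\Z_l$ after applying the trace — rather than as a canonical choice; this is immaterial for the compatibility $\deg_{\cL}^{\et}\circ cl=\deg_{\cL}$, since the output is already canonically a $\Z$-valued integer coming from a geometric degree.
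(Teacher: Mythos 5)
Your proposal is correct and is essentially the fleshed-out version of the paper's one-line proof (which simply says the lemma follows from the compatibility of the cycle map with products): you define $\deg_{\cL}^{\et}$ by cup product with $c_1^{\et}(\cL)^{d-i}$ followed by the trace map, and then the claimed identity is exactly the conjunction of multiplicativity of $cl$ and the trace–degree compatibility for $0$-cycles, which is what ``compatible with products'' is meant to encode here.
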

\begin{proof}
The follows from the fact that the cycle map is compatible with
products.
\end{proof}

\vspace{\abstand}

\begin{defi}\label{cyclescomplexity}
Let $k$ be a field, $X$ a proper, smooth, connected scheme over $k$ and
$c\in\N$ a natural number.
\begin{enumerate}
\item We say that an element $x\in CH^i(X)$ has \emph{complexity less than
$c$} and write $compl(x)<c$ iff we can write $x$ as
a difference $x=x_0-x_1$ of two effective  cycles $x_0,x_1\in CH^i(X)$
with
$$\deg(x_0)<c \text{ and } \deg(x_1)<c.$$
\item We say that an element $x\in CH^i(X)\otimes\Q$ has \emph{complexity less than
$c\in\N$} and write $compl(x)<c$ iff there is an $x'\in
CH^i(X)$ with
$compl(x')<c$ and an $\alpha\in\N$ with $\alpha<c$ such that  $\alpha \cdot x=x'$.
\item We say that an element $x\in CH^i(X)\otimes \Z_l$ has \emph{complexity less than
$c\in\N$} and write $compl(x)<c$ iff we can write it as
$x=\sum_{i=1}^{n}\alpha_i x_i$ with $\alpha_i\in\Z_l$ and $x_i\in
CH^i(X)$ such that for all $i=1,\dots, n$ we have
$$compl(x_i)<\frac{c}{n}.$$
\item We say that an element $x\in CH^i(X)\otimes\Q_l$ has \emph{complexity less than
$c\in\N$} and write $compl(x)<c$ iff there is an $x'\in
CH^i(X)\otimes \Z_l$ with
$compl(x')<c$ and an $\alpha\in\Q_l$ with $|\alpha|_l<c$ such that  $\alpha \cdot x'=x$.
\end{enumerate}
\end{defi}

\vspace{\abstand}

\begin{rem}
The reason why (ii) and (iv) seems to be inverse to each other is the following:
$\Z\subset \Q$ is discrete and unbounded in the archimedian metric and $\Z_l\subset \Q_l$
is not discrete and bounded in the $l$-adic topology.
\end{rem}

\vspace{\abstand}

\begin{rem}
Part (iv) of definition \ref{cyclescomplexity} gives essentially the same
notion of complexity as definition \ref{firstcyclescomplexity} from the
introduction. For more details we refer to remark \ref{comparecomplexitydef}.
\end{rem}

\vspace{\abstand}

\begin{rem}
It would be nice to be able to give the complexity only in terms of
the degree of cycles, but that is in
general not possible. To see why, consider a smooth, projective surface $F$ over $\C$ with
$p_g\neq 0$. By \cite{mumfordfinite}, there is no $n\in\N$ such that the map
\begin{equation}\label{mumfordsym}
Sym^n F\times Sym^n F \rightarrow CH_0(X)_0
\end{equation}
is surjective. Here $CH_0(X)_0$ denotes $0$-cycles of degree zero, and $Sym^n$ denotes the n-th
symmetric power. Furthermore, $x\in CH_0(X)_0$ is a cycle with $compl(x)<n$ if and only if $x$ lies
in the image of \eqref{mumfordsym}.
\end{rem}

\vspace{\abstand}

\begin{rem}
The behavior of this notion of complexity under intersection products
is studied in \cite{EoSII}[theorem 5.11].
\end{rem}

\vspace{\abstand}

By $*:\hat{S}\rightarrow \hat{W}=\hat{\str{S}}$ we always denote an
enlargement of the superstructure $\hat{S}$ in the sense of nonstandard
analysis (cf. for example \cite{loeb}[section I.2]). We rigorously use
the concept of enlarging categories, schemes, cycles and  \'etale cohomology,
which the authors developed in \cite{enlcat}, \cite{enlsch} and \cite{EoSII}.
We always assume that our fixed superstructure $\hat{S}$ is large enough,
so that all needed categories are $\hat{S}$-small in the sense of \cite{enlcat}.

\begin{rem}\label{comparecomplexitydef}
\mbox{}\\[-5mm]
\begin{enumerate}
\item  Let $K$ be an internal field, $X$ a *proper, *smooth, *connected *scheme with a *ample
$\cO_X$-*module $\cL$. Then by transfer we get the degree map
$$\str{\deg_{\cL}}:\str{CH}^i(X) \rightarrow \str{\Z}$$
and a corresponding notion of *complexity. Then the complexity of a *cycle on $X$ can be bound by an
element of $\N\subset\str{\N}$ in the sense of definition \ref{firstcyclescomplexity}
if and only if it can be bound by an element of $\N\subset\str{\N}$ in the sense of
definition \ref{cyclescomplexity}.
\item If we compare part (i) of definition \ref{cyclescomplexity} with
definition \cite{EoSII}[5.1] the same as in (i) is true.
\end{enumerate}
\end{rem}

If $\fX\rightarrow \fB$ and $\spec A\rightarrow \fB$ are morphisms of schemes,
we denote the fibre product by $\fX_A:=\fX \times_{\fB} \spec A$. Also,
if $\fX\rightarrow \fB$ and $\spec A\rightarrow \fB$ are *morphisms of *schemes,
we denote the *fibre product by $\fX_A:=\fX \str{\times_{\fB}} \spec A$.
Finally, for a field (respectively internal field) $k$,
we denote a separable closure (respectively *separable *closure) of $k$ by $\overline{k}$.

\section{Lifting cycles modulo homological equivalence}

In this section we consider mainly the following situation: Let
$\fB\subset \spec \Z$ be an open, non-empty subset, $\fX\rightarrow \fB$ a
smooth and proper morphism and $\cL$ an ample sheaf on $\fX$. For a
point $s\in \fB$ we
consider the l-adic cohomology groups of the geometric fibre
$$ H^{2i}_\et(\fX_{\overline{\kappa(s)}},\Z_l(i)):=
\varprojlim_n H^{2i}_\et(\fX_{\overline{\kappa(s)}},\mu_{l^n}^{\otimes i}),$$
$$ H^{2i}_\et(\fX_{\overline{\kappa(s)}},\Q_l(i)):=
H^{2i}_\et(\fX_{\overline{\kappa(s)}},\Z_l(i))\otimes_{\Z_l} \Q_l.$$
By the smooth and proper base change theorems we know that the
specialization homomorphisms
$$H^{2i}_\et(\fX_{\overline{\kappa(s)}},\mu_{l^n}^{\otimes i})\rightarrow
     H^{2i}_\et(\fX_{\overline{\Q}},\mu_{l^n}^{\otimes i}),$$
$$H^{2i}_\et(\fX_{\overline{\kappa(s)}},\Z_l(i))\rightarrow
     H^{2i}_\et(\fX_{\overline{\Q}},\Z_l(i)),$$
and
$$H^{2i}_\et(\fX_{\overline{\kappa(s)}},\Q_l(i))\rightarrow
     H^{2i}_\et(\fX_{\overline{\Q}},\Q_l(i))$$
are isomorphisms.
Also, if $K/\Q$ is a field extension with a chosen embedding
$\overline{\Q}\rightarrow\overline{K}$, the pullback homomorphisms
$$H^{2i}_\et(\fX_{\overline{\Q}},\mu_{l^n}^{\otimes i})
\rightarrow H^{2i}_\et(\fX_{\overline{K}},\mu_{l^n}^{\otimes i}),$$
$$H^{2i}_\et(\fX_{\overline{\Q}},\Z_l(i))
\rightarrow H^{2i}_\et(\fX_{\overline{K}},\Z_l(i)),$$
and
$$H^{2i}_\et(\fX_{\overline{\Q}},\Q_l(i))
\rightarrow H^{2i}_\et(\fX_{\overline{K}},\Q_l(i))$$
are isomorphisms. In what follows, we identify these groups without
further mentioning.

\vspace{\abstand}

If $K$ is a field and $\spec{K}\rightarrow \fB$ is a morphism, we consider the
following cycle class maps
\begin{enumerate}
\item $cl_K: CH^i(\fX_K)\rightarrow H_\et^{2i}(\fX_{\overline{K}},\Z_l(i)),$
\item $cl_K: CH^i(\fX_K)\otimes \Q \rightarrow H_\et^{2i}(\fX_{\overline{K}},\Q_l(i)).$
\item $cl_K: CH^i(\fX_K)\otimes \Z_l \rightarrow H_\et^{2i}(\fX_{\overline{K}},\Z_l(i)),$
\item $cl_K: CH^i(\fX_K)\otimes \Q_l \rightarrow H_\et^{2i}(\fX_{\overline{K}},\Q_l(i)).$
\end{enumerate}
In each case we investigate the question whether a cohomology class in the l-adic cohomology,
which is algebraic over the finite fields in $\fB$, is also algebraic over a field of characteristic zero.
For that we use our notion of complexity of section 2.

\vspace{\abstand}
{\bf (i) The case
$cl_K: CH^i(\fX_K)\rightarrow H_\et^{2i}(\fX_{\overline{K}},\Z_l(i))$}
\vspace{\abstand}

\begin{thm}\label{firstmaintheorem}
Let as above $\fX\rightarrow \fB$ be a smooth, proper morphism, $\fB\subset
\spec \Z$ open and non empty, $\cL$ an ample sheaf on $\fX$ and
$\eta\in H^{2i}_\et(\fX_{\overline{\Q}},\Z_l(i))$ a cohomology class of
the geometric generic fibre of $\fX$. We assume that there is a
constant $c\in\N$ such that there are infinitely many closed points
$b\in \fB$ with:
\begin{quote}
There is a $z_b\in CH^i(\fX_{\kappa(b)})$ with $cl(z_b)=\eta$ and
$compl(z_b)<c$ with respect to the ample sheaf
$\cL|_{\fX_{\kappa(b)}}$.
\end{quote}
Then there is a field extension $K/\Q$ and an element $z_K\in
CH^i(\fX_K)$ with $cl(z_K)=\eta$ in $H^{2i}_\et(\fX_{\overline{K}},\Z_l(i))$.
\end{thm}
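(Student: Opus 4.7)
The plan is to pass to an enlargement of a sufficiently large superstructure in the sense of Robinson, use the hypothesis to produce an internal cycle of standard-bounded complexity over a characteristic-zero field $K$ assembled from residue fields at infinitely many closed points of $\fB$, and then apply the descriptions of the image of morphism \eqref{firstN} and of the ultraproduct cohomology morphism \eqref{secondN} proved in \cite{EoSII} to pull this internal cycle down to a standard element mapping to $\eta$. Concretely, let $S\subset\fB$ be the infinite set of closed points $b$ supplied by the hypothesis, and pick an internal closed point $b\in\str{S}$ of infinite residue characteristic; set $K:=\str{\kappa(b)}$ viewed externally. Since $S$ contains points of arbitrarily large residue characteristic, $K$ has characteristic zero, and the morphism $\spec K\to\fB$ extends $\spec\Q\to\fB$. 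The $z_b$ for $b\in S$ transfer to an internal cycle $\str{z}\in\str{CH}^i(\fX_K)$ whose internal complexity is bounded by the standard integer $c$.

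By the image description of \eqref{firstN} from \cite{EoSII}, an internal cycle on $\fX_K$ whose complexity is bounded by a standard natural number lies in the image of the standard Chow group $CH^i(\fX_K)$; the translation between the complexity notion of definition \ref{cyclescomplexity}(i) used there and the one supplied by the hypothesis is exactly the content of remark \ref{comparecomplexitydef}. This produces $z\in CH^i(\fX_K)$ with $\str{z}$ as image. It remains to compare $cl_K(z)$ with $\eta$ in $H^{2i}_\et(\fX_{\overline{K}},\Z_l(i))$.

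For each fixed $n\in\N$ the commutative diagram recalled in the introduction, combined with the fact that \eqref{secondN} is an isomorphism for the coefficients $\Z/l^n\Z(i)$, gives $cl_K(z)\equiv\eta\pmod{l^n}$ in $H^{2i}_\et(\fX_{\overline{K}},\Z/l^n\Z(i))$. Since each such cohomology group is finite, Mittag-Leffler is automatic, and passing to the inverse limit over $n$ yields $cl_K(z)=\eta$ in $H^{2i}_\et(\fX_{\overline{K}},\Z_l(i))$, which is the desired conclusion. The main obstacle I expect is precisely this coefficient mismatch: \eqref{secondN} is an isomorphism only at each finite level, because ultraproducts do not commute with inverse limits, so the $\Z_l$-equality has to be assembled one $n$ at a time on the standard side rather than being read off a single isomorphism. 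A secondary technical point is to track the various smooth-and-proper base-change identifications between $H^{2i}_\et(\fX_{\overline{\kappa(b)}},\cdot)$, $H^{2i}_\et(\fX_{\overline{\Q}},\cdot)$ and $H^{2i}_\et(\fX_{\overline{K}},\cdot)$, so that the symbol $\eta$ refers to literally the same class on all sides of the comparison.
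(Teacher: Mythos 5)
Your proposal is correct and follows essentially the same route as the paper's own proof: transfer to an internal closed point of infinite residue characteristic, use Lemma \ref{lem:imageN} (the image description of $N$ on Chow groups, i.e.\ morphism \eqref{firstN}) together with remark \ref{comparecomplexitydef} to pull the internal cycle of standard-bounded complexity back to an external cycle over the characteristic-zero residue field, and then verify $cl_K(z)=\eta$ one finite coefficient level $\Z/l^n\Z$ at a time (where \eqref{secondN} is an isomorphism) before passing to the inverse limit. The coefficient-mismatch issue you flag and the base-change bookkeeping are exactly what the paper handles via Lemma \ref{commutative} and \cite{EoSII}[Cor.~2.14].
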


For the proof we use the following lemma:

\begin{lemma}\label{lem:imageN}
Let $K$ be an internal field, X a proper scheme over $K$ and $\cL$ an
ample sheaf on $X$. Then the image of the morphism
$$N:CH^i(X)\rightarrow \str{CH}^i(N(X))$$
consists exactly of those elements in $\str{CH}^i(N(X))$ whose
*complexity is less than $c$ for a natural number $c\in\N$.
\end{lemma}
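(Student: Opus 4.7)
The plan is to prove the two inclusions of the claimed equality between $\mathrm{im}(N)$ and $\{y \in \str{CH}^i(N(X)) : \exists c \in \N,\ \str{compl}(y) < c\}$ separately.

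For the easy direction, I would take an arbitrary $x \in CH^i(X)$, represented by an externally finite $\Z$-linear combination $\sum_{j=1}^r n_j[Z_j]$ of classes of integral subschemes. Splitting into positive and negative parts gives effective representatives of degree bounded by $c := 1 + \sum_j |n_j|\deg_\cL(Z_j) \in \N$; since $N$ is compatible with sums and with the degree map, this immediately yields $\str{compl}(N(x)) < c$.

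For the reverse direction, I would start with $y \in \str{CH}^i(N(X))$ satisfying $\str{compl}(y) < c$ for some standard $c \in \N$, and pick effective representatives $\tilde y_0, \tilde y_1 \in \str{Z}^i_{eff}(N(X))$ of a decomposition $y = [\tilde y_0] - [\tilde y_1]$ with $d_j := \str{\deg}_\cL(\tilde y_j) < c$. Because $c \in \N$, each $d_j$ is a standard natural number. Writing $\tilde y_j$ in its unique irredundant prime decomposition $\tilde y_j = \sum_k m_{j,k}[Z_{j,k}]$ over distinct *integral *subschemes $Z_{j,k}$ of *codimension $i$ with $m_{j,k} \in \str{\N}_+$, the internal identity
\[
\sum_k m_{j,k}\,\str{\deg}_\cL(Z_{j,k}) = d_j
\]
together with the lower bounds $m_{j,k} \geq 1$ and $\str{\deg}_\cL(Z_{j,k}) \geq 1$ forces both the number of summands and each individual multiplicity to be bounded above by the standard number $d_j$, hence standard themselves. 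Therefore $\tilde y_j$ is in fact an \emph{externally} finite combination of classes of *integral *subschemes, so it defines an element of $Z^i(X)$ in the authors' external-finite-sum framework; its class in $CH^i(X)$ maps under $N$ to $[\tilde y_j]$, and subtracting gives $y \in \mathrm{im}(N)$.

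The main obstacle — and the conceptual heart of the lemma — will be precisely the translation from an internal bound on *complexity into an external bound on the support size of an effective representative. The key observation is the simple counting argument above: whenever $\sum_k m_k\,\str{\deg}_\cL(Z_k) = d$ for an effective *cycle of standard *degree $d$, both the number of summands and each multiplicity are forced to be standard naturals because each summand contributes at least $1$ to the total. This collapse of an a priori internal-finite sum into an externally finite one is exactly what bridges $\str{CH}^i(N(X))$ and $CH^i(X)$ in the lemma.
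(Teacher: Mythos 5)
Your easy direction is fine, and for the reverse direction the paper itself gives no argument at all but simply cites Lemma~5.2 of \cite{EoSII} together with Remark~\ref{comparecomplexitydef}(ii); so you are in effect re-deriving that cited lemma. Your reduction to an effective $\tilde y_j$ of standard $*$degree, and the observation that the internal prime decomposition $\tilde y_j=\sum_k m_{j,k}[Z_{j,k}]$ then has a standard number of summands with standard multiplicities, are correct.

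However, there is a real gap at the final step ``so it defines an element of $Z^i(X)$.'' Each $Z_{j,k}$ is a $*$integral $*$closed $*$subscheme of $N(X)$, not a closed subscheme of $X$, and an externally finite $\Z$-combination of such objects is an element of $\str{Z}^i(N(X))$, not of $Z^i(X)$. What you still have to show is that every $*$integral $*$closed $*$subscheme of $N(X)$ of \emph{standard} degree lies in the image of $N$ on integral closed subschemes of $X$. This is precisely where the substance of the cited lemma lives, and it does not follow from the counting argument: after embedding $X\hookrightarrow\P^n_K$ via $\cL$, a $*$closed $*$subscheme of $\str{\P}^n_K$ is cut out by an internal ideal of $\str{K[x_0,\dots,x_n]}$, which contains $*$polynomials of infinite degree, so one genuinely needs a finiteness input. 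The standard route is through Hilbert (or Chow) schemes: a bound on the degree bounds the possible $*$Hilbert $*$polynomials (Gotzmann/Castelnuovo--Mumford regularity), each such polynomial $P$ is standard, $\Hilb{P}{X}{K}$ is of finite type over $K$, and a $K$-$*$point of $N(\Hilb{P}{X}{K})$ is the same as a $K$-point of $\Hilb{P}{X}{K}$ because $K$ is both the internal and the external base field. Only then does $Z_{j,k}=N(Z'_{j,k})$ for an honest $Z'_{j,k}\subset X$, and your conclusion follows. In short: the counting step you single out as ``the conceptual heart'' is the easy part; the bridge between $\str{CH}^i(N(X))$ and $CH^i(X)$ is the descent of bounded-degree $*$subschemes to honest subschemes, and your proposal asserts it without proof.
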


\begin{proof}
This follows from lemma 5.2 in \cite{EoSII} and remark \ref{comparecomplexitydef} (ii).
\end{proof}

\begin{proof}[Proof of theorem \ref{firstmaintheorem}]
By transfer there are an $b\in \str{\fB}$, corresponding to an infinite
prime $P\in\str{\P}$, and $x_b\in\str{CH}^i(\str{\fX}_{\kappa(b)})$
with
\begin{equation}\label{firstone}
\str{compl}(x_b)<c
\end{equation}

and

\begin{equation}\label{secondone}
\str{cl}(x_b)=\str{\eta}.
\end{equation}

The field $\kappa(b)$ is externally of characteristic zero.
By definition (cf.\cite{enlsch}) we have $N(\fX_{\kappa(b)})=\str{\fX}_{\kappa(b)}$.
Therefore condition (\ref{firstone}) and lemma
\ref{lem:imageN} imply that $x_b$ lies in the image of
$$N:CH^i(\fX_{\kappa(b)})\rightarrow \str{CH}^i(\str{\fX}_{\kappa(b)}),$$
i.e. there is an $z_{\kappa(b)}\in CH^i(\fX_{\kappa(b)})$ with $N(z_{\kappa(b)})=x_b$.
For all $n\in\N$ the above morphism fits into the following commutative diagram
\begin{displaymath}\label{diagram}
\xymatrix@C=20mm{
CH^i(\fX_{\kappa(b)}) \ar[r]^N \ar[d] &
              \str{CH}^i(\str{\fX}_{\kappa(b)}) \ar[d] \\
H^{2i}_\et(\fX_{\overline{\kappa(b)}},\mu_{l^n}^{\otimes i})
\ar[r]^{N}_{\sim} &
\str{H}^{2i}_\et(\str{\fX}_{\str{\overline{\kappa(b)}}},\str{\mu}_{l^n}^{\otimes
  i}) \ar[d]^{\wr} \\
H^{2i}_\et(\fX_{\overline{\Q}},\mu_{l^n}^{\otimes i})\ar[u]^{\wr}
\ar[r]^{*}_{\sim} &
\str{H}^{2i}_\et(\str{\fX}_{\str{\overline{\Q}}},\str{\mu}_{l^n}^{\otimes i}).
}
\end{displaymath}

The upper square commutes by \cite{EoSII}, the lower square by the following lemma
\ref{commutative}, and the horizontal morphism in the middle
$$N: H^{2i}_\et(\fX_{\overline{\kappa(b)}},\mu_{l^n}^{\otimes i})
\rightarrow
\str{H}^{2i}_\et(\str{\fX}_{\str{\overline{\kappa(b)}}},\str{\mu}_{l^n}^{\otimes
  i})$$
is an isomorphism by \cite{EoSII}[Cor.2.14].

Thus we see that for all $n\in\N$ we have
$$cl(z_{\kappa(b)})=\eta \text{ in } H^{2i}_\et(\fX_{\kappa(b)},\mu_{l^{\otimes n}}^{\otimes i})$$
and therefore
$$ cl(z_{\kappa(b)})=\eta \text{ in } H^{2i}_\et(\fX_{\kappa(b)},\Z_l(i)).$$
\end{proof}

\begin{lemma}\label{commutative}
In the above situation we have the following commutative diagram:
\[
\xymatrix@C=30mm{
H^{2i}_\et(\fX_{\overline{\kappa(b)}},\mu_{l^n}^{\otimes i})
\ar[r]^{N}_{\sim} &
\str{H}^{2i}_\et(\str{\fX}_{\str{\overline{\kappa(b)}}},\str{\mu}_{l^n}^{\otimes
  i}) \ar[d]^{\wr} \\
H^{2i}_\et(\fX_{\overline{\Q}},\mu_{l^n}^{\otimes i})\ar[u]^{\wr}
\ar[r]^{*}_{\sim} &
\str{H}^{2i}_\et(\str{\fX}_{\str{\overline{\Q}}},\str{\mu}_{l^n}^{\otimes i}).
}
\]
\end{lemma}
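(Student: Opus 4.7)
The plan is to realize both vertical isomorphisms as pullbacks along one and the same external morphism of schemes, and then invoke the naturality of the comparison map $N$ with respect to such pullbacks, as established in \cite{EoSII}.

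Since $b\in\str{\fB}$ corresponds to an infinite prime $P$, the residue field $\kappa(b)$ has characteristic zero externally. I would fix an external field embedding $\overline{\Q}\hookrightarrow\overline{\kappa(b)}$ compatible with the structure maps to $\fB$, inducing an external morphism of $\fB$-schemes $\iota\colon \fX_{\overline{\kappa(b)}}\to\fX_{\overline{\Q}}$. The pullback $\iota^*$ on $l$-adic \'etale cohomology is an isomorphism by smooth and proper base change (invariance under algebraically closed extension) and coincides with the left vertical arrow of the diagram. Applying the $*$-functor, and using that $*$ commutes with fibre products, yields a $*$-morphism $\str{\iota}\colon \str{\fX}_{\str{\overline{\kappa(b)}}}\to\str{\fX}_{\str{\overline{\Q}}}$ whose pullback $(\str{\iota})^*$ on $*$-\'etale cohomology is the inverse of the internal specialization isomorphism forming the right vertical arrow. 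Naturality of $N$ applied to $\iota$ then produces a commutative square with $\iota^*$ on top, $(\str{\iota})^*$ on bottom and $N$ vertically; inverting the horizontal isomorphisms yields exactly the diagram of the lemma.

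The point requiring care is the identification in the previous paragraph of $(\str{\iota})^*$ with the inverse of the internal specialization isomorphism. Internally, $\kappa(b)$ has characteristic $P$, so no internal embedding $\str{\overline{\Q}}\hookrightarrow\overline{\kappa(b)}$ exists and the internal specialization is not visibly a pullback along an internal field morphism. Nevertheless, the $*$-morphism $\str{\iota}$ exists as the enlargement of the external $\iota$, and transferring the externally canonical nature of the smooth proper base-change isomorphism (namely that it equals $\iota^*$ for any choice of embedding $\overline{\Q}\hookrightarrow\overline{\kappa(b)}$) identifies $(\str{\iota})^*$ with the inverse of the internal specialization, closing the argument.
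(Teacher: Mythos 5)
Your proposal has a genuine gap at the crucial step, namely the construction of the morphism $\str{\iota}$ and the identification of $(\str{\iota})^*$ with the right vertical arrow.

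The enlargement functor $*$ only applies to \emph{standard} objects, i.e.\ elements of the base superstructure $\hat{S}$. Your $\iota\colon\fX_{\overline{\kappa(b)}}\to\fX_{\overline{\Q}}$ is an \emph{external} morphism whose source involves $\overline{\kappa(b)}$, which depends on the nonstandard point $b$ and is therefore internal but not standard. There is no such thing as ``the enlargement of the external $\iota$''; the phrase has no meaning in the framework. You in fact observe the symptom yourself: internally $\kappa(b)$ has characteristic $P$, so no internal field embedding $\str{\overline{\Q}}\hookrightarrow\overline{\kappa(b)}$ exists, and hence no internal morphism $\str{\fX}_{\overline{\kappa(b)}}\to\str{\fX}_{\str{\overline{\Q}}}$ of the required kind exists either. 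Your attempted repair, ``transferring the externally canonical nature of the smooth proper base-change isomorphism,'' does not close the gap: transfer converts a first-order statement about standard objects into a statement about their $*$-images, so the transferred statement quantifies over \emph{internal} embeddings $\str{\overline{\Q}}\hookrightarrow\overline{\kappa(b)}$ — of which there are none — and is thus vacuous rather than an identification of $(\str{\iota})^*$ with the internal specialization map.

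The paper's proof avoids this entirely by never trying to realize the right arrow as a pullback along a morphism of schemes. Instead it considers the single external scheme $\fX\otimes\str{\Z}\to\spec{\str{\Z}}$ and the two external prime ideals $(0)$ and $b$ of $\str{\Z}$. The left vertical arrow is identified with (the inverse of) the external specialization homomorphism between the geometric fibres over these two points, while the right vertical arrow is the internal specialization homomorphism; commutativity is then the compatibility of $N$ for \'etale cohomology with specialization homomorphisms, proved in \cite{EoSII}. If you want to keep your spirit of ``pulling back along a field embedding,'' the correct move is to use that embedding only on the external side, where it legitimately exists, and then invoke the cited compatibility of $N$ with specialization for the internal side — which is exactly what the paper does.
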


\begin{proof}
Consider the base extension
$$\fX\otimes \str{\Z} \rightarrow \spec{\str{\Z}}.$$
The left arrow of the diagram can be identified with the inverse of the
base change homomorphism of the two points $(0)\in\str{\Z}$ and
$b\in\spec{\str{\Z}}$ (note that internal prime ideals are in particular
external ones). The lemma then follows from the compatibility of $N$
for \'{e}tale cohomology with the specialization homomorphism
(cf. \cite{EoSII}).
\end{proof}

\vspace{\abstand}

{\bf (ii) The case $cl_K: CH^i(\fX_K)\otimes \Q \rightarrow H_\et^{2i}(\fX_{\overline{K}},\Q_l(i))$}

\vspace{\abstand}

\begin{thm}\label{satz:einhauptsatz}
Let as above $\fX\rightarrow \fB$ be a smooth and proper morphism, $\fB\subset
\spec \Z$ open and non-empty, $\cL$ an ample sheaf on $\fX$ and
$\eta\in H^{2i}_\et(\fX_{\overline{\Q}},\Q_l(i))$ a cohomology class of
the geometric generic fibre of $\fX$. We assume that there is a
constant $c\in\N$ such that there are infinitely many closed points
$b\in \fB$ with:
\begin{quote}
There is a $z_b\in CH^i(\fX_{\kappa(b)})\otimes \Q$ with $cl(z_b)=\eta$ and
$compl(z_b)<c$ with respect to the ample sheaf
$\cL|_{\fX_{\kappa(b)}}$.
\end{quote}
Then there are a field extension $K/\Q$ and an element $z_K\in
CH^i(\fX_K)\otimes \Q$ with $cl(z_K)=\eta$ in $H^{2i}_\et(\fX_K,\Q_l(i))$.
\end{thm}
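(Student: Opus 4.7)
The plan is to reduce this to theorem \ref{firstmaintheorem} by uniformly clearing denominators. The key observation is that in definition \ref{cyclescomplexity}(ii) the denominator $\alpha$ is a natural number required to be less than $c$, so under transfer it can only take finitely many (standard) values. This is what allows the rational statement to be deduced formally from the integral one.

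First I apply transfer to the hypothesis: since infinitely many standard $b \in \fB$ admit a suitable $z_b$, there is a point $b \in \str{\fB}$ corresponding to an infinite prime, together with an element $x_b \in \str{CH}^i(\str{\fX}_{\kappa(b)}) \otimes \str{\Q}$ satisfying $\str{cl}(x_b) = \str{\eta}$ and $\str{compl}(x_b) < c$. By the transferred version of definition \ref{cyclescomplexity}(ii) there exist $\alpha \in \str{\N}$ with $\alpha < c$ and $y_b \in \str{CH}^i(\str{\fX}_{\kappa(b)})$ with $\str{compl}(y_b) < c$ such that $\alpha \cdot x_b = y_b$; since $c \in \N$, automatically $\alpha \in \N$. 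The field $K := \kappa(b)$ is externally of characteristic zero, and $N(\fX_K) = \str{\fX}_K$. Lemma \ref{lem:imageN}, applicable because $\str{compl}(y_b)$ is bounded by the standard integer $c$, produces an element $z'_K \in CH^i(\fX_K)$ with $N(z'_K) = y_b$.

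Setting $z_K := \frac{1}{\alpha} z'_K \in CH^i(\fX_K) \otimes \Q$, it remains to verify $cl(z_K) = \eta$ in $H^{2i}_\et(\fX_{\overline{K}}, \Q_l(i))$. By $\Q$-linearity of the cycle class map this reduces to $cl(z'_K) = \alpha \cdot \eta$ in $H^{2i}_\et(\fX_{\overline{K}}, \Z_l(i))$. The latter is obtained exactly as in the proof of theorem \ref{firstmaintheorem}: for each $n \in \N$ the commutative diagram relating $cl$, the $N$-maps and the specialization/pullback isomorphisms (lemma \ref{commutative}) combined with $\str{cl}(y_b) = \alpha \str{cl}(x_b) = \alpha \str{\eta}$ yields the equality modulo $l^n$; passing to the inverse limit gives the equation with $\Z_l$-coefficients, and tensoring with $\Q_l$ concludes the argument.

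The only step that is not completely mechanical is the control on $\alpha$: the bound $\alpha < c \in \N$ from definition \ref{cyclescomplexity}(ii) is exactly what keeps the denominator standard under transfer and makes the division by $\alpha$ meaningful in $CH^i(\fX_K) \otimes \Q$. Everything else is parallel to the integral case.
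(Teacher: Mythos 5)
Your proof is correct and follows essentially the same route as the paper's own argument: transfer, then use definition \ref{cyclescomplexity}(ii) to extract a standard denominator $\alpha < c$ and an integral cycle $y_b$ of complexity $< c$, apply lemma \ref{lem:imageN} and the diagram argument from theorem \ref{firstmaintheorem} to descend $y_b$ to a standard $z'_K$ with $cl(z'_K) = \alpha\eta$, and divide by $\alpha$. The extra detail you give about passing modulo $l^n$ and tensoring with $\Q_l$ is just an unpacking of the paper's "as in the proof of theorem \ref{firstmaintheorem}".
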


\begin{proof}
By transfer there are a $b\in \str{\fB}$, corresponding to an infinite
prime $P\in\str{\P}$, and $x_b\in\str{CH}^i(\str{\fX}_{\kappa(b)})\otimes \str{\Q}$
with $\str{compl}(x_b)<c$ and $\str{cl}(x_b)=\str{\eta}$.

By the definition of complexity there are an $\alpha\in\str{\N}$
with $\alpha<c$, in particular $\alpha\in\N$,
and $x'_b\in\str{CH}^i(\str{\fX_{\kappa(b)}})$ with $\str{compl}(x'_b)<c$ such
that $\alpha \cdot x_b= x_b'$.
As in the proof of theorem~\ref{firstmaintheorem} we see that there is a
$z_b \in CH^i(\fX_{\kappa(b)})$ with $$
N(z_b)=x'_b$$
and that
$$cl_{\kappa(b)}(z_b)=\alpha \cdot \eta,$$
and therefore we have
$$cl_{\kappa(b)}(\frac{1}{\alpha}\cdot z_b)=\eta$$
as desired.
\end{proof}

\vspace{\abstand}

\begin{rem}
One prominent example, where one would like to know whether a cohomology class lies in the image of
$$CH^i(X)\otimes \Q \rightarrow H^{2i}_\et(\overline{X},\Q_l(i)),$$
 is the case of the K\"unneth components. Over finite fields it is known by \cite{katzmessing}
that the K\"unneth components are algebraic. Unfortunately one uses the Frobenius morphism to construct
the cycles, and it is not possible to use this representation to find a uniform bound for the
complexity.
\end{rem}

\vspace{\abstand}

{\bf (iii) The case $cl_K: CH^i(\fX_K)\otimes \Z_l \rightarrow H_\et^{2i}(\fX_{\overline{K}},\Z_l(i))$}

\vspace{\abstand}

\begin{thm}
Let as above $\fX\rightarrow \fB$ be a smooth proper morphism, $\fB\subset
\spec \Z$ open and non-empty, $\cL$ an ample sheaf on $\fX$ and
$\eta\in H^{2i}_\et(\fX_{\overline{\Q}},\Z_l(i))$ a cohomology class of
the geometric generic fibre of $\fX$. We assume that there is a
constant $c\in\N$ such that there are infinitely many closed points
$b\in \fB$ with:
\begin{quote}
There is an $z_b\in CH^i(\fX_{\kappa(b)})\otimes \Z_l $ with $cl(z_b)=\eta$ and
$compl(z_b)<c$ with respect to the ample sheaf
$\cL|_{\fX_{\kappa(b)}}$.
\end{quote}
Then there are a field extension $K/\Q$ and an element $z_K\in
CH^i(\fX_K)\otimes \Z_l$ with $cl(z_K)=\eta$ in $H^{2i}_\et(\fX_K,\Z_l(i))$.
\end{thm}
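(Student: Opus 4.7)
The plan is to combine the argument of Theorem~\ref{firstmaintheorem} with a standard-part map $\mathrm{st}\colon\str{\Z_l}\to\Z_l$ handling the internal $l$-adic coefficients. First I would transfer the hypothesis to obtain an infinite prime $P\in\str{\P}$, corresponding to a point $b\in\str{\fB}$, and an internal cycle $x_b\in\str{CH}^i(\str{\fX}_{\kappa(b)})\otimes\str{\Z_l}$ with $\str{cl}(x_b)=\str{\eta}$ and $\str{compl}(x_b)<c$. Unwinding the transferred form of Definition~\ref{cyclescomplexity}~(iii) then gives $n\in\str{\N}$ with $n\leq c$ (hence $n\in\N$), internal coefficients $\alpha_1,\dots,\alpha_n\in\str{\Z_l}$, and internal cycles $x_{b,1},\dots,x_{b,n}\in\str{CH}^i(\str{\fX}_{\kappa(b)})$ of *complexity $<c/n$ with $x_b=\sum_{i=1}^n\alpha_i x_{b,i}$.

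Since each $\str{compl}(x_{b,i})$ is bounded by a standard natural number, Lemma~\ref{lem:imageN} yields $z_{b,i}\in CH^i(\fX_K)$ with $N(z_{b,i})=x_{b,i}$, where $K:=\kappa(b)$ is a characteristic zero extension of $\Q$, exactly as in the proof of Theorem~\ref{firstmaintheorem}. Next I would exploit the finiteness of $\Z_l/l^m\Z_l$ for every standard $m$: transfer gives $\str{(\Z_l/l^m\Z_l)}=\Z_l/l^m\Z_l$, and the compatible system of reduction maps $\str{\Z_l}\twoheadrightarrow\Z_l/l^m\Z_l$ assembles into a canonical ring homomorphism $\mathrm{st}\colon\str{\Z_l}\to\Z_l$. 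Setting $\tilde\alpha_i:=\mathrm{st}(\alpha_i)\in\Z_l$, I would define the candidate
$$z_K:=\sum_{i=1}^n\tilde\alpha_i\otimes z_{b,i}\in CH^i(\fX_K)\otimes\Z_l.$$

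It then remains to verify $cl_K(z_K)=\eta$ in $H^{2i}_\et(\fX_{\overline{K}},\Z_l(i))$, which I would do level by level: for every $m\in\N$ one reduces the equality modulo $l^m$ to an identity in the finite group $H^{2i}_\et(\fX_{\overline{\Q}},\mu_{l^m}^{\otimes i})$, using the commutative diagram from the proof of Theorem~\ref{firstmaintheorem} together with Lemma~\ref{commutative}. Since by construction $\tilde\alpha_i\equiv\alpha_i\pmod{l^m}$ in $\Z_l/l^m\Z_l=\str{\Z_l}/l^m\str{\Z_l}$, the mod-$l^m$ computation of $cl_K(z_K)$ reproduces that of $\str{cl}(x_b)=\str{\eta}$, i.e.\ $\eta\bmod l^m$; passing to the $\varprojlim_m$ gives the desired equality in $\Z_l$-cohomology.

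The step I expect to be the main obstacle is the construction and compatible use of $\mathrm{st}\colon\str{\Z_l}\to\Z_l$: it is the one genuinely new ingredient beyond Theorem~\ref{firstmaintheorem}, and relies on the fact that $\Z_l$ is $l$-adically complete with finite quotients, so that internal $l$-adic integers have canonical standard residues at every finite level. Once this standard-part construction is in place, replacing the $\alpha_i$ by their standard parts is compatible with the cycle class map at each finite level, and the argument closes just as in the integral and rational cases treated above.
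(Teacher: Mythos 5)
Your proposal matches the paper's proof in essentially every step: transfer to an infinite prime $b$, unwind Definition~\ref{cyclescomplexity}~(iii) to get a decomposition $x_b=\sum_{i=1}^n\alpha_i x_{b,i}$, pull each $x_{b,i}$ back along $N$ via Lemma~\ref{lem:imageN}, replace the internal coefficients by their standard parts $\mathrm{st}(\alpha_i)\in\Z_l$, and verify $cl_K(z_K)=\eta$ level-by-level using that $\alpha_i\equiv\mathrm{st}(\alpha_i)\pmod{l^m}$ and the diagram from Theorem~\ref{firstmaintheorem}. The one place where you take a shortcut is the claim that the definition directly gives $n\leq c$, hence $n\in\N$; the paper instead observes that if $n$ were an infinite hypernatural, then $c/n$ would be infinitesimal and the condition $\str{compl}(x_{b,i})<c/n$ would force $x_{b,i}=0$, so one may WLOG take $n$ standard. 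Your bound $n\leq c$ is not an immediate consequence of the stated definition (which does not explicitly cap $n$), so you should justify the standardness of $n$ by an argument of this kind rather than asserting the inequality outright; with that fixed, the proofs coincide.
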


\begin{proof}
By transfer there are a $b\in \str{\fB}$ corresponding to an infinite
prime $P\in\str{\P}$ and $x_b\in\str{CH}^i(\str{\fX}_{\kappa(b)})\otimes \str{\Z_l}$
with
\begin{equation}
\str{compl}(x_b)<c
\end{equation}

and

\begin{equation}
\str{cl}(x_b)=\str{\eta}.
\end{equation}.

Now by the definition of the complexity of elements in $\str{CH}^i(\str{\fX}_{\kappa(b)})\otimes \Z_l$,
we can write $x_b=\str{\sum}_{i=1}^n \alpha_i x_i$ with $\alpha_i\in\Zs_l$ and
$x_i\in \str{CH}^i(\fX_{\kappa(b)})$ with $\str{compl}(x_i)<\frac{c}{n}$.
A priori we have $n\in\str{\N}$, but if we assume $n\in\str{\N}-\N$ then $\frac{c}{n}$ would be infinitesimal
and $compl(x_i)<\frac{c}{n}$ together with $x_i\in\str{CH}^i(\str{\fX}_{\kappa(b)})$ would imply $x_i=0$. So
we assume that $n\in\N$ and that the sum $x_b=\sum_{i=1}^n \alpha_i x_i$ is finite.
By lemma \ref{lem:imageN}, as in the proof of theorem \ref{firstmaintheorem},
there are $z_{i,b}\in CH^i(\fX_{\kappa(b)})$ such that
$cl(z_{i,b})=x_i$. Now $\Q_l$ is
complete with respect to the $l$-adic norm, and $\Z_l=\bigl\{\alpha\in\Q_l\bigl\vert|\alpha|\leq1\bigr\}$. Therefore we have
the standard part map
$$st:\str{\Z_l}\rightarrow \Z_l$$
with the property that for all $\alpha\in\str{\Z_l}$, the difference $\alpha-st(\alpha)$ is infinitesimally small.
In particular this means that for all standard $n\in\N\subset\str{\N}$
\begin{equation}\label{standarteilabsch}
\alpha=st(\alpha) \text{ in } \str{\Z_l}/l^n\str{\Z_l}=\Z/l^n\Z.
\end{equation}

We can thus define
$$z_b:=\sum_{i=1}^n st(\alpha_i)\cdot z_{i,b},$$
use \eqref{standarteilabsch} and argue as in the proof of theorem \ref{firstmaintheorem} to show that
$$ cl_{\kappa(b)}(z_b)=\eta.$$
\end{proof}

\vspace{\abstand}

{\bf (iv) The case $cl_K: CH^i(\fX_K)\otimes \Q_l \rightarrow H_\et^{2i}(\fX_{\overline{K}},\Q_l(i))$}

\vspace{\abstand}

\begin{thm}\label{satz:einhauptsatz2}
Let as above $\fX\rightarrow \fB$ be a smooth and proper morphism, $\fB\subset
\spec \Z$ open and non-empty, $\cL$ an ample sheaf on $\fX$ and
$\eta\in H^{2i}_\et(\fX_{\overline{\Q}},\Q_l(i))$ a cohomology class of
the geometric generic fibre of $\fX$. We assume that there is a
constant $c\in\N$ such that there are infinitely many closed points
$b\in \fB$ with:
\begin{quote}
There is a $z_b\in CH^i(\fX_{\kappa(b)})\otimes \Q_l$ with $cl(z_b)=\eta$ and
$compl(z_b)<c$ with respect to the ample sheaf
$\cL|_{\fX_{\kappa(b)}}$.
\end{quote}
Then there are a field extension $K/\Q$ and an element $z_K\in
CH^i(\fX_K)\otimes \Q_l$ with $cl(z_K)=\eta$ in $H^{2i}_\et(\fX_K,\Q_l(i))$.
\end{thm}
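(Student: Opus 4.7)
The plan is to combine the two earlier techniques: the argument of theorem \ref{satz:einhauptsatz} (case (ii)), which deals with a scalar denominator, and the argument of the preceding $\Z_l$-theorem (case (iii)), which deals with a sum of cycles with $\str{\Z_l}$-coefficients via the standard part map. By transfer we obtain an infinite prime $P \in \str{\P}$ corresponding to some $b \in \str{\fB}$ and a *cycle $x_b \in \str{CH}^i(\str{\fX}_{\kappa(b)}) \otimes \str{\Q_l}$ with $\str{compl}(x_b) < c$ and $\str{cl}(x_b) = \str{\eta}$; as in the proofs above the residue field $\kappa(b)$ is externally of characteristic zero and $N(\fX_{\kappa(b)}) = \str{\fX}_{\kappa(b)}$.

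Unpacking part (iv) of definition \ref{cyclescomplexity} by transfer produces $\alpha \in \str{\Q_l}$ with $|\alpha|_l < c$ and $y_b \in \str{CH}^i(\str{\fX}_{\kappa(b)}) \otimes \str{\Z_l}$ with $\str{compl}(y_b) < c$ such that $x_b = \alpha \cdot y_b$. Since $c \in \N$ we may choose a standard integer $M \in \N$ with $l^M \geq c$, so that $\tilde{\alpha} := l^M \alpha$ lies in $\str{\Z_l}$. Next, exactly as in the $\Z_l$-theorem, part (iii) of definition \ref{cyclescomplexity} gives a *decomposition $y_b = \str{\sum}_{i=1}^{n} \beta_i y_i$ with $\beta_i \in \str{\Z_l}$ and $y_i \in \str{CH}^i(\str{\fX}_{\kappa(b)})$ of *complexity less than $c/n$; the usual infinitesimal argument forces $n \in \N$, so this sum is externally finite. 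Lemma \ref{lem:imageN} then supplies $z_i \in CH^i(\fX_{\kappa(b)})$ with $N(z_i) = y_i$.

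Crucially, the products $\gamma_i := \tilde{\alpha} \beta_i$ lie in $\str{\Z_l}$, so the standard part map $st:\str{\Z_l} \to \Z_l$ is applicable and I can define
$$z_b := l^{-M} \sum_{i=1}^{n} st(\gamma_i) \cdot z_i \;\in\; CH^i(\fX_{\kappa(b)}) \otimes \Q_l.$$
To check $cl_{\kappa(b)}(z_b) = \eta$ in $H^{2i}_\et(\fX_{\overline{\kappa(b)}}, \Q_l(i))$, I pass to the integral multiple $w_b := l^M z_b = \sum_i st(\gamma_i) z_i \in CH^i(\fX_{\kappa(b)}) \otimes \Z_l$ and show that $cl(w_b) = l^M \eta$ in $\Z_l$-cohomology, by verifying equality modulo $l^n$ for every standard $n \in \N$. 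Using $st(\gamma_i) \equiv \gamma_i \pmod{l^n \str{\Z_l}}$ together with the commutative diagram of the proof of theorem \ref{firstmaintheorem} and lemma \ref{commutative}, this reduces to the internal identity $\sum_i \gamma_i \str{cl}(y_i) = \tilde{\alpha} \cdot \str{cl}(y_b) = \tilde{\alpha}\alpha^{-1} \str{\eta} = l^M \str{\eta}$. Taking the inverse limit in $n$ gives $cl(w_b) = l^M \eta$; dividing by $l^M$ in $\Q_l$-coefficients then yields $cl_{\kappa(b)}(z_b) = \eta$, so $K := \kappa(b)$ is the desired extension of $\Q$.

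The main obstacle is that the standard part map is defined only on the bounded part $\str{\Z_l} \subset \str{\Q_l}$, so $\alpha$ itself (and the coefficients $\alpha \beta_i$) cannot be directly rounded. The essential device is therefore to clear denominators first by multiplying with the standard power $l^M$, reducing the problem to a situation to which the $\str{\Z_l}$-technique of the previous theorem applies, and to reinstate the factor $l^{-M}$ only at the very end, inside the $\Q_l$-tensored cycle group.
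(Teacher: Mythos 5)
Your proof is correct and follows essentially the same route as the paper's: transfer, unpack part (iv) of Definition \ref{cyclescomplexity} to get $x_b=\alpha\cdot y_b$, decompose $y_b$ as in the $\Z_l$-case, pass to standard parts, and chase the commutative diagram mod $l^n$. The paper compresses all of this into ``the claim then follows as in the proof of the previous theorem,'' so your explicit handling of the denominator $\alpha$ (clearing it with a standard power $l^M$ and reinstating $l^{-M}$ at the end) is a welcome expansion of a step the paper leaves to the reader. One small imprecision: since $\Q_l$ is locally compact, the standard part map is in fact defined on \emph{all} limited elements of $\str{\Q_l}$, not only on $\str{\Z_l}$, and $|\alpha\beta_i|_l<c$ already guarantees the $\alpha\beta_i$ are limited; so they can be rounded directly. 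Your $l^M$-trick is equivalent (it just conjugates the extended standard part into the $\str{\Z_l}\to\Z_l$ one) and causes no harm, but the stated ``obstacle'' is not really there.
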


\begin{proof}
By transfer there are an $b\in \str{\fB}$, corresponding to an infinite
prime $P\in\str{\P}$, and $x_b\in\str{CH}^i(\str{\fX}_{\kappa(b)})\otimes \str{\Q_l}$
with $\str{compl}(x_b)<c$ and $\str{cl}(x_b)=\str{\eta}$.

By the definition of complexity there are an $\alpha\in\str{\Q_l}$ and
$x'_b\in\str{CH}^i(\str{\fX_{\kappa(b)}})\otimes \str{\Z_l}$ such
that $|\alpha|_l<c$ and $x_b=\alpha_b\cdot x_b'$.
The claim then follows as in the proof of the previous theorem.
\end{proof}

\vspace{\abstand}

\begin{cor}\label{cor:main}
Let $\fX$, $\fB$ and $\cL$ be as in the theorem. Now we assume that there is a
constant $c\in\N$, such that for \emph{almost all} closed points $s\in\fB$ we have:
\begin{quote}
There is a $z_b\in CH^i(\fX_{\kappa(b)})\otimes \Q_l$ with $cl(z_b)=\eta$ and
$compl(z_b)<c$ with respect to the ample sheaf
$\cL|_{\fX_{\kappa(b)}}$.
\end{quote}
Then there is an element $z_{\Q}\in CH^i(X)\otimes
\Q_l$ such that
$$cl(z_{\Q})=\eta.$$
\end{cor}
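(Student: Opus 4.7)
The plan is to combine Theorem~\ref{satz:einhauptsatz2} with a two-step descent: first from the large field extension $K/\Q$ produced by the theorem down to a number field, and then from that number field down to $\Q$ itself via a Galois trace argument. Since the ``almost all'' hypothesis of the corollary is strictly stronger than the ``infinitely many'' hypothesis of Theorem~\ref{satz:einhauptsatz2}, I can invoke the theorem directly to obtain a field extension $K/\Q$ and an element $z_K \in CH^i(\fX_K)\otimes\Q_l$ with $cl_K(z_K)=\eta$ in $H^{2i}_\et(\fX_{\overline{K}},\Q_l(i))\cong H^{2i}_\et(\fX_{\Qb},\Q_l(i))$.

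Next I would descend $z_K$ to a number field by a standard spreading-out argument. The cycle $z_K$ is a finite $\Q_l$-linear combination of subschemes, so it is defined over some finitely generated subfield $K_0\subset K$. Realizing $K_0$ as the function field of an integral $\Q$-variety $B$, one spreads $z_K$ out to a cycle on $\fX \times_\Q U$ for some dense open $U \subset B$; specializing at any closed point $v\in U$ produces $z_L \in CH^i(\fX_L)\otimes\Q_l$, where $L:=\kappa(v)$ is a number field. The compatibility of the cycle class map with specialization, combined with smooth and proper base change, gives $cl_L(z_L)=\eta$.

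Finally I would descend from $L$ to $\Q$ using the Galois trace along the finite flat projection $\pi:\fX_L\to\fX_\Q$ of degree $d:=[L:\Q]$, setting $z_\Q := \frac{1}{d}\pi_*(z_L)\in CH^i(\fX_\Q)\otimes\Q_l$. Using the compatibility of the cycle class map with proper pushforward together with the decomposition $\fX_L \otimes_\Q \Qb \cong \bigsqcup_{\sigma:L\hookrightarrow\Qb} \fX_{\Qb}$, one computes $cl_\Q(\pi_*(z_L))=\sum_\sigma \sigma^*\eta$. The key input is that $\eta$ is $\mathrm{Gal}(\Qb/\Q)$-invariant: this is where the full ``almost all'' hypothesis enters, since the existence of $z_p$ over $\F_p$ with $cl(z_p)=\eta$ for a cofinite set of primes forces $\eta$ to be invariant under the geometric Frobenius at $p$ for almost every $p$, and Cebotarev density together with the continuity of the Galois action on $l$-adic cohomology then yields invariance under all of $\mathrm{Gal}(\Qb/\Q)$. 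Granted this invariance, $\sum_\sigma\sigma^*\eta = d\cdot\eta$ and hence $cl_\Q(z_\Q)=\eta$ as required. The hardest step will be precisely this final pushforward computation, which demands careful bookkeeping of how the geometric cycle class on $\fX_{\overline{L}}$ distributes across the $d$ components of $\fX_L\otimes_\Q\Qb$ and of how the various embeddings $\sigma:L\hookrightarrow\Qb$ twist that class by Galois elements; every other ingredient reduces to standard compatibilities already invoked in the proof of Theorem~\ref{satz:einhauptsatz2}.
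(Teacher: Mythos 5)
Your overall architecture matches the paper's: invoke Theorem~\ref{satz:einhauptsatz2} (the ``infinitely many'' hypothesis being implied by ``almost all''), establish Galois invariance of $\eta$ from the hypothesis on almost all $p$ via Chebotarev density, and finish with a Galois trace $\frac{1}{[k':\Q]}\sum_{\sigma}\sigma\cdot x$. The genuine divergence is in how you get from the large field $K$ down to a finite extension of $\Q$. You spread $z_K$ out over a finitely generated subfield $K_0\subset K$, realize $K_0$ as the function field of a $\Q$-variety, and specialize at a closed point to land in a number field $L$; this requires verifying that specialization of cycles is compatible with the cycle class map and with the smooth-proper base change identifications, a standard but somewhat fiddly compatibility that you would need to spell out (generic flatness of the spread-out cycle, choice of the dense open $U$, etc.). The paper instead invokes rigidity of Chow groups with finite coefficients (its Lemma~3.9): $CH^i(X_{\overline{k}})/l^n \to CH^i(X_{\overline{K}})/l^n$ is bijective for an extension of algebraically closed fields, so one may assume $K=\overline{k}=\Qb$ outright, and then the cycle is automatically defined over a finite subextension $k'\subset\Qb$. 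The rigidity route avoids the specialization bookkeeping entirely and is the shorter path; your spreading-out route is more elementary in the sense of not needing the rigidity theorem, but pays for it with the specialization compatibility, which is precisely the kind of argument that the rigidity lemma lets the paper skip. Both are correct, and both use Chebotarev and the Galois trace identically at the end.
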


\begin{proof}
The corollary follows from the theorem, the density theorem of
Chebotarev and the next lemma.
\end{proof}

\vspace{\abstand}

\begin{lemma}
Let $k$ be a field and $X$ a smooth, projective scheme over $k$. Let
$\eta\in{H^{2i}_\et(X_{\overline{k}},\Q_l(i))}$ be a cohomology class
which is invariant under the Galois group $Gal(\overline{k}/k)$. Assume
further that there is a field extension $K/k$, such that
$$\eta\in H^{2i}_\et(X_{\overline{k}},\Q_l(i))=H^{2i}_\et(X_{\overline{K}},\Q_l(i))$$
is in the image of
$$cl:CH^i(X_{K})\otimes \Q_l \rightarrow H^{2i}_\et(X_{\overline{K}},\Q_l(i)).$$
Then $\eta$ is in the image of
$$CH^i(X)\otimes \Q_l\rightarrow H^{2i}_\et(X_{\overline{k}},
\Q_l(i)).$$
\end{lemma}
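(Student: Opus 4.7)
The plan is to descend the algebraicity of $\eta$ from $K$ to $k$ in three stages: first reduce to a finite extension by spreading out and specializing, then Galois-average the resulting cycle, and finally invoke Galois descent for Chow groups tensored with $\Q_l$.

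First I would reduce to the case $K/k$ finite. A representative of $z_K$ in $CH^i(X_K) \otimes \Q_l$ is a finite sum $\sum_j \alpha_j [Z_j]$ whose integral subschemes $Z_j \subset X_K$ are each cut out by finitely many equations, so they descend to a finitely generated subfield $K_0 \subset K$. Since for $X$ smooth projective over $k$ the $l$-adic cohomology $H^{2i}_\et(X_{\overline{F}}, \Q_l(i))$ is independent of the choice of separably closed extension $F/k$, I may replace $K$ by $K_0$. Writing $K_0 = k(V)$ for an integral $k$-variety $V$ and shrinking $V$ if necessary, the scheme-theoretic closures of the $Z_j$ in $X \times_k V$ give a cycle $\tilde z \in CH^i(X \times V) \otimes \Q_l$ that restricts generically to $z_{K_0}$. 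For any closed point $v \in V$, specialization produces $z_{k'} \in CH^i(X_{k'}) \otimes \Q_l$, where $k' := \kappa(v)$ is finite over $k$. Smooth and proper base change applied to the constant family $f: X \times V \to V$ shows that $R^{2i} f_* \Q_l(i)$ is the constant sheaf on $V$ with value $H^{2i}_\et(X_{\overline{k}}, \Q_l(i))$; combined with the compatibility of the cycle class map with flat pullback, this yields $cl(z_{k'}) = \eta$ under the canonical identification of the cohomology of $X_{\overline{k'}}$ with $H^{2i}_\et(X_{\overline{k}}, \Q_l(i))$.

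Next, let $k''/k$ be the Galois closure of $k'/k$, with Galois group $G := \mathrm{Gal}(k''/k)$. Pulling back gives $z_{k''} \in CH^i(X_{k''}) \otimes \Q_l$ with $cl(z_{k''}) = \eta$. Because $\eta$ is $\mathrm{Gal}(\overline{k}/k)$-invariant, each translate $\sigma^*(z_{k''})$ for $\sigma \in G$ also maps to $\eta$, so the Galois average
$$\bar z := \frac{1}{|G|} \sum_{\sigma \in G} \sigma^*(z_{k''}) \in \bigl(CH^i(X_{k''}) \otimes \Q_l\bigr)^G$$
still satisfies $cl(\bar z) = \eta$. Finally, for the finite étale Galois cover $\pi: X_{k''} \to X$ the projection formula yields $\pi^* \pi_* = \sum_{\sigma \in G} \sigma^*$ on $CH^i(X_{k''})$ and $\pi_* \pi^* = |G|$ on $CH^i(X)$; after inverting $|G|$ these show that $\pi^*$ induces an isomorphism $CH^i(X) \otimes \Q_l \xrightarrow{\sim} \bigl(CH^i(X_{k''}) \otimes \Q_l\bigr)^G$. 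Hence $\bar z = \pi^*(z)$ for a unique $z \in CH^i(X) \otimes \Q_l$, and $cl(z) = \eta$, as desired.

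The main obstacle is the specialization step: one has to verify carefully that the cycle class of the specialization $z_{k'}$ coincides with $\eta$ under the various canonical identifications of $l$-adic cohomology. This rests on the fact that for the constant family $X \times V \to V$ the higher direct images $R^{2i}f_* \Q_l(i)$ are themselves constant sheaves on $V$, so that the image of $cl(\tilde z)$ under the Leray edge map gives a global section of this constant sheaf which is simultaneously $\eta$ at the generic fiber and $cl(z_{k'})$ at $v$. Once this is established, both the Galois averaging and the descent isomorphism are purely formal consequences of inverting $|G|$.
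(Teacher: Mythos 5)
Your proof is correct, but it takes a genuinely different route from the paper's in the crucial first reduction step. The paper invokes rigidity for Chow groups with finite coefficients (the Suslin--Lecomte theorem), which gives directly that $CH^i(X_{\overline{k}})/l^n \to CH^i(X_{\overline{K}})/l^n$ is bijective, and uses this to pass immediately from $K$ to $\overline{k}$. You instead avoid rigidity entirely: you spread the cycle out over an integral $k$-variety $V$ whose function field is a finitely generated subfield of $K$, and then specialize at a closed point $v\in V$ to land over a finite extension $k'=\kappa(v)$ of $k$. The price is that you need the compatibility of the cycle class map with the cohomological specialization isomorphism (i.e.\ that $R^{2i}f_*\Q_l(i)$ is a lisse sheaf on which the generic and closed geometric stalks are canonically identified, and that the relative cycle class of $\tilde z$ gives a global section matching both $\eta$ generically and $cl(z_{k'})$ at $v$) --- which is precisely the kind of specialization compatibility the paper already uses in the proofs of its main theorems, so the machinery is consistent with the rest of the paper. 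Once you are over a finite extension, the Galois-averaging and descent via $\pi_*\pi^*=|G|$ and $\pi^*\pi_*=\sum_\sigma \sigma^*$ is essentially the same step the paper calls ``Galois descent for Chow groups,'' only spelled out. Net comparison: the paper's route is shorter but leans on a nontrivial rigidity theorem as a black box; yours is more elementary and more explicit, at the cost of a spreading-out argument and a little care in choosing $v$ (one wants $V$ smooth near $v$ so that specialization of cycles is defined, and $\kappa(v)/k$ separable so a Galois closure exists --- both harmless after shrinking $V$).
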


\begin{proof}
By rigidity for Chow groups with finite coefficients, the map
$$ CH^i(X_{\overline{k}})/l^n\rightarrow CH^i(X_{\overline{K}})/l^n$$
is bijective. So we can assume that $K=\overline{k}$. Now let $\eta=cl(x)$
with $x\in CH^i(X_{\overline{k}})\otimes \Q_l$. Let $k'$ be a finite field extension of
$k$ with $k\subset k'\subset \overline{k}$, such that $x$ is already
defined over $k'$. By the Galois invariance of $\eta$ we have
$$cl(\sum_{\sigma\in Gal(k'/k)}\sigma\cdot x)= [k':k]\eta,$$
and the claim follows from Galois decent for Chow groups.
\end{proof}

The next theorem states that for divisors, we do not need the notion of complexity.

\begin{thm}\label{thm:withoutcompl}
Let $\fX\rightarrow \fB\subset\spec{\Z}$ be a smooth and proper
morphism with geometrically reduced fibers, $\cL$ an ample line
bundle on $\fX$, and let $\eta\in
H^2_\et(\fX\otimes_{\Q}\overline{\Q},\Z_l(i))$ be a cohomology class of
the generic fibre. We assume that for infinitely many closed points $s\in\fB$ there is
$z_s\in CH^1(\fX\otimes_{\Z}\kappa(s))$ such that
$$cl(z_s)=\eta.$$
Then there are a field extension $K/\Q$ and an element $z\in
CH^1(\fX_K)$ with
$$cl(z)=\eta \text{ in } H^2_\et(\fX_{\overline{K}},\Z_l(i)).$$
\end{thm}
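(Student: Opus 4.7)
The plan is to reduce Theorem \ref{thm:withoutcompl} to Theorem \ref{firstmaintheorem} by showing that, in the divisor case $i=1$, every $z_s\in CH^1(\fX_{\kappa(s)})$ with $cl(z_s)=\eta$ automatically satisfies $compl(z_s)<c$ for some standard constant $c\in\N$ independent of $s$. Once this uniform complexity bound is in place, the hypotheses of Theorem \ref{firstmaintheorem} are met, yielding the desired field extension $K/\Q$ and cycle $z_K\in CH^1(\fX_K)$.

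First I would exploit that $CH^1(\fX_{\kappa(s)})=\Pic{\fX_{\kappa(s)}}$ and apply the lemma $\deg_\cL=\deg_\cL^{\et}\circ cl$ to deduce $\deg_\cL(z_s)=\deg_\cL^{\et}(\eta)=:d_0$, a single standard integer. Next, the Kummer sequence combined with finite generation of the N\'eron--Severi group yields an injection $\mathrm{NS}(\fX_{\overline{\kappa(s)}})\otimes\Z_l\hookrightarrow H^2_\et(\fX_{\overline{\kappa(s)}},\Z_l(1))$, so the N\'eron--Severi class of $z_s$ is uniquely determined by $\eta$. By smooth and proper base change for \'etale cohomology, this class is the specialization of a single fixed class $[\eta]\in\mathrm{NS}(\fX_{\overline{\Q}})$, so all $z_s$ lie in one component $\mathcal{P}:=\mathrm{Pic}^{[\eta]}_{\fX/\fB}$ of the relative Picard scheme, a torsor under the abelian scheme $\mathrm{Pic}^0_{\fX/\fB}$ (possibly after shrinking $\fB$), hence proper and of finite type over $\fB$.

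Since $\mathcal{P}$ is of finite type, a uniform Serre vanishing argument in the flat proper family $\fX/\fB$ produces a standard $m_0\in\N$ such that for every closed point $s\in\fB$ and every $\mathcal{M}\in\mathcal{P}(\kappa(s))$, $\mathcal{M}\otimes\cL^{\otimes m_0}$ has non-zero global sections and hence represents an effective divisor. Writing $z_s=(z_s+m_0\cL)-m_0\cL$ realizes $z_s$ as a difference of two effective divisors of $\cL$-degree at most $d_0+m_0\deg_\cL(\cL)$, giving $compl(z_s)<c$ for a standard $c$, and Theorem \ref{firstmaintheorem} applies. The main obstacle is justifying the uniform $m_0$: identifying $\mathcal{P}$ as a bona fide $\fB$-subscheme of finite type (controlling how $\mathrm{Pic}^0_{\fX/\fB}$ extends across $\fB$ and matching specializations of numerical classes) and invoking an appropriate uniform effectivity statement for line bundles in a flat proper family. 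An alternative, more streamlined route is to stay inside the enlargement formalism: transfer the finite-type property of $\mathcal{P}$ and apply it to an element $x_b\in\str{CH}^1(\str{\fX}_{\kappa(b)})$ obtained by transfer from the hypothesis, concluding via Lemma \ref{lem:imageN} in the style of Theorem \ref{firstmaintheorem}.
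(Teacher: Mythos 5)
Your reduction to Theorem \ref{firstmaintheorem} via a uniform complexity bound is a genuinely different route from the paper's. The paper argues entirely inside the enlargement: by transfer one obtains a single $z_s\in\str{CH}^1(\str{\fX}_{\kappa(s)})$ at an infinite prime $s$ with $\str{cl}(z_s)=\str{\eta}$, observes that the Hilbert polynomial of $z_s$ can be read off from the (standard) cohomology class $\eta$ and is therefore a standard polynomial, and then cites Corollary~5.4 of \cite{EoSII}, which says precisely that a *divisor class with standard Hilbert polynomial lies in the image of $N$. That corollary is doing the work that in your argument is spread across the Picard-scheme analysis (finite-typeness of the component, identification of the N\'eron--Severi class, uniform effectivity via Serre vanishing) plus the descent encoded in Lemma~\ref{lem:imageN}. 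So both proofs exploit the same geometric fact --- for divisors, the cohomology class already controls all the numerical invariants needed --- but the paper packages it once and for all as a criterion in terms of the Hilbert polynomial, while you derive a uniform bound for $\deg_\cL$ of effective representatives directly.

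On correctness: the idea is sound, but the gaps you flag are real and do need care. The component $\mathcal{P}=\mathrm{Pic}^{[\eta]}_{\fX/\fB}$ is of finite type but need not carry a universal line bundle (the Picard functor may not be fine), so the uniform Serre-vanishing argument should be run on the $\mathrm{Pic}$-sheafification after an \'etale cover, or phrased via boundedness of the family of sheaves rather than via a universal bundle; also, one must check that the $z_s$ for the infinitely many $s$ do land in a single component, i.e.\ that their N\'eron--Severi classes are all the specialization of the fixed class determined by $\eta$ over $\overline\Q$ (this uses injectivity of $\mathrm{NS}\hookrightarrow H^2_\et(\cdot,\Z_l(1))$ and the smooth--proper base-change identifications already set up in the paper). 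After possibly shrinking $\fB$, these points can be handled, so your route works, but the paper's appeal to the Hilbert-polynomial criterion in \cite{EoSII} is the more economical path.
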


\begin{proof}
Main parts of the proof are the same as in the proof of theorem \ref{firstmaintheorem}.

By transfer there is an $s\in\str{\fB}$, corresponding to an infinite
prime, such that there is a $z_s\in\str{CH}^1(\str{\fX}_{\kappa(s)})$
with
$$\str{cl}(z_s)=\str{\eta}.$$
Now the Hilbert polynomial of $z_s$ can be calculated in the \'{e}tale cohomology and is therefore
finite. Therefore the theorem follows from Corollary \cite{EoSII}[5.4]
\end{proof}


\bibliographystyle{alpha}
\bibliography{../Literatur}

\end{document}